
\documentclass[11pt]{article}

\usepackage{graphicx}
\usepackage{amsthm}
\usepackage{amsmath}
\usepackage{amsfonts}
\usepackage{amssymb}
\usepackage{amscd}
\usepackage[dvips]{hyperref}
\usepackage[usenames]{color}

\theoremstyle{plain}
\newtheorem{theorem}{Theorem}[section]
\newtheorem{lemma}[theorem]{Lemma}
\newtheorem{proposition}[theorem]{Proposition}

\theoremstyle{definition}
\newtheorem{definition}[theorem]{Definition}
\newtheorem{example}[theorem]{Example}

\newtheorem{remark}[theorem]{Remark}
\newtheorem{question}[theorem]{Question}




\def\surjection{\twoheadrightarrow}
\def\normal{\lhd}		


\def\Aut{\mathrm{Aut}}		

\def\N{\mathbb{N}}		
\def\Z{\mathbb{Z}}		
\def\Q{\mathbb{Q}}		
\def\R{\mathbb{R}}		
\def\C{\mathbb{C}}

\def\Sp{\mathrm{Sp}}		
\def\GL{\mathrm{GL}}		
\def\SL{\mathrm{SL}}		

\def\Mod{\mathrm{Mod}}		
\def\T{\mathcal{I}}		

\def\Teich{\mathcal{T}}		

\def\HH{\mathrm{H}}               

\def\Max{\mathrm{max}}

\newcommand\Dim{\ensuremath{dim}}

\title{Irreducible $\Sp$-representations and subgroup distortion in the 
mapping class group}
\author{Nathan Broaddus, Benson Farb, Andrew Putman\thanks{The first two authors are supported in part by the NSF.}}
\date{April 8, 2009}

\topmargin        -.3cm
\evensidemargin  0.1cm
\oddsidemargin   0.1cm
\textheight      21.1cm
\textwidth       16.3cm

\begin{document}
\maketitle

\begin{abstract} 
We prove that various subgroups of the mapping
class group $\Mod(\Sigma)$ of a surface $\Sigma$
are at least exponentially distorted.  Examples include the
Torelli group (answering a question of Hamenst\"adt), the
``point-pushing'' and surface braid subgroups,
and the Lagrangian subgroup.  Our techniques include
a method to compute lower bounds on distortion via
representation theory and an extension of Johnson theory to arbitrary
subgroups of $\HH_1(\Sigma;\Z)$.
\end{abstract}


\section{Introduction}
\label{section:introduction}

We begin with a basic motivating question.  
Manifolds $M$ are commonly presented as gluings or (possibly 
singular) fiberings of simpler manifolds, together with 
the data specifying the gluing or fibering.  In low 
dimensions, the gluing/fibering data commonly takes the form of
an element, or a finite collection of elements, of the {\em mapping class
group} $\Mod_g$, which is the group of homotopy classes of
orientation-preserving homeomorphisms of the closed, oriented, genus
$g$ surface $\Sigma_g$.  Examples include Heegaard decompositions
of $3$-manifolds, monodromies of surface bundles, and monodromies of 
Lefschetz fibrations. 

The topology of the resulting manifold $M$ can often be controlled by
requiring that the gluing data lie in various special subgroups
$K<\Mod_g$.  We then have a purely group-theoretic problem: 
determine whether or not a given element $f\in\Mod_g$, given as a
product of generators of $\Mod_g$ (say a generating set of Dehn twists),
lies in $K$.  The problem of finding such an algorithm is called solving
the {\em generalized word problem} for $K$ in $\Mod_g$.  The generalized
word problem is a classical problem in combinatorial group theory; it
was introduced and studied by Nielsen and Magnus.

A basic example is provided by
taking $K$ to be the {\em Torelli group} $\T_g<\Mod_g$, that is,
the kernel of the natural symplectic representation
$\pi:\Mod_g\to \Sp(2g,\Z)$ given by the action of $\Mod_g$ on
$\HH_1(\Sigma_g;\Z)$.  In this case, the solution to the
generalized word problem is easy: one simply computes the
induced linear map $f_\ast\in\Sp(2g,\Z)$ via matrix multiplication and
checks whether or not $f_\ast={\rm Id}$.  With this in mind, the more
refined and useful problem is to actually express a given element
of $\T_g$, written in generators for $\Mod_g$, in
terms of generators of the subgroup $\T_g$.  A standard quantitative
measure of the (in)efficiency of doing this is the notion of 
{\em distortion}, which we now explain.

\paragraph{Distortion in groups.}
Let $\Gamma$ be a finitely generated group
endowed with the word metric $\Vert \cdot \Vert_\Gamma$.  Any finitely
generated subgroup $K<\Gamma$ comes equipped with its own word metric
$\Vert \cdot \Vert_K$, and it is a basic problem in geometric group
theory to understand the geometry of the embedding $K \hookrightarrow
\Gamma$, that is, to compare the ``intrinsic metric'' $\Vert \cdot
\Vert_K$ with the ``extrinsic metric'' on $K$ given by the restriction
of $\Vert \cdot \Vert_\Gamma$ to $K$.  It is clear that there exist
constants $C, C'$ such that 
$$\Vert h \Vert_\Gamma \leq C \Vert h \Vert_K
+ C' \mbox{\ \ \ for all \ }h\in K.$$ 

However, there may be
``shortcuts'' in $\Gamma$ between elements of $K$.  This can be encoded
by a function $\delta:\N\to\N$, called the \emph{distortion} of $K$ in
$\Gamma$, which is defined to be the smallest function satisfying
$$\Vert h \Vert_K \leq \delta \left( \Vert h \Vert_\Gamma \right)
\mbox{\ \ \ for all \ }h \in K.$$ 

It is easy to see that $\delta$
changes by at most a linear factor if different generating sets are
chosen for $\Gamma$ or for $K$; thus the {\em growth type} of $\delta$
(e.g.\ polynomial of degree $d$, exponential, etc.)\ is independent of
these choices\footnote{Some people use terminology differing from ours
by a linear factor, so for example what we call ``linearly distorted''
they call ``undistorted''.}.  For an introduction to distortion and its
basic properties, see \cite{Gromov1993,Farb1994}.

We note here that for groups $\Gamma$ with solvable word problem, the
distortion of $K$ in $\Gamma$ gives a quantitative measure of the
efficiency of solving the generalized word problem for $K$ in $\Gamma$.
In particular, the generalized word problem for $K$ in $\Gamma$ is
solvable if and only if the distortion of $K$ in $\Gamma$ is recursive
(see \cite{Farb1994}). This problem can be unsolvable in simple
examples.  For instance, Mihailova (see, e.g.\ \cite{Farb1994}) found a
finitely generated subgroup $K$ in a product $F_m\times F_m$ of free
groups which has an unsolvable generalized word problem, and hence has
nonrecursive distortion.


\paragraph{Statement of results.}
In this paper we give bounds for the
distortions of various subgroups of $\Mod_g$.  Some
results in this direction are already known.  Stillwell
\cite{Stillwell} used Mihailova's example to find 
finitely generated subgroups of $\Mod_g$ with nonrecursive
distortion.  In every other case which has been previously investigated,
the distortion has turned out to be linear.  Examples of linearly
distorted subgroups of $\Mod_g$ include abelian subgroups \cite{FLM},
subgroups corresponding to mapping class groups of subsurfaces
\cite{MM2} (see \cite{Ham1} for another proof), and convex, cocompact subgroups \cite{FMo}.

These results led Hamenst\"adt to pose
the problem (see \cite[Problem~6]{Ham2}) of finding subgroups of 
$\Mod_g$ with (recursive) super-linear distortion.  In particular, 
she asked if the Torelli group 
$\T_g$ is linearly distorted in $\Mod_g$.  Recall that $\T_g$ is 
finitely generated for $g>2$ by a deep theorem of 
Johnson \cite[Theorem~2]{Johnson1983}.  It was conjectured in
\cite{Farb2006} (Problem 3.7 and the discussion following it) that
$\T_g$ has exponential distortion in $\Mod_g$.  Our first result
confirms the lower bound of this conjecture.  

To state our results in their full generality, let $\Sigma_{g,b}^p$ be
an oriented genus $g$ surface with $b$ boundary components and $p$
punctures (thought of as marked points), and let $\Mod_{g,b}^p$ be the
group of homotopy classes of homeomorphisms of $\Sigma_{g,b}^p$ which
fix the punctures and the boundary components pointwise (either $b$ or
$p$ will be omitted from our notation if they equal $0$).  When $b \in
\{0,1\}$, the {\em Torelli group} $\T_{g,b}$ is defined as 
the kernel of the action of
$\Mod_{g,b}$ on $\HH_1(\Sigma_{g,b};\Z)$.

\begin{theorem}[Distortion of the Torelli group]
\label{thm:torelli_distortion}
For $g\geq 3$, the distortion of $\T_g$ (resp. $\T_{g,1}$) in $\Mod_g$
(resp. $\Mod_{g,1}$) is at least exponential and at most doubly
exponential.
\end{theorem}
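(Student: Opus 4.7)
My plan for the lower bound is to exhibit an explicit sequence $h_n\in\T_g$ whose $\Mod_g$-word length is linear in $n$ while its $\T_g$-word length grows exponentially. Fix $\phi\in\Mod_g$ whose symplectic image $\pi(\phi)\in\Sp(2g,\Z)$ has an eigenvalue of modulus strictly greater than $1$ (abundant, e.g.\ the image of a generic pseudo-Anosov), and fix $\psi\in\T_g$ with nonzero image under the Johnson homomorphism $\tau:\T_g\to\Lambda^3 \HH_1(\Sigma_g;\Z)/\HH_1(\Sigma_g;\Z)$. Set
$$h_n \;:=\; [\phi^n,\psi] \;=\; \phi^n\psi\phi^{-n}\psi^{-1}\;\in\;\T_g.$$
Then $\|h_n\|_{\Mod_g}\leq 2n+2\|\psi\|_{\Mod_g}$ by construction. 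For the lower bound on $\|h_n\|_{\T_g}$ I would exploit the $\Sp$-equivariance of $\tau$:
$$\tau(h_n) \;=\; \bigl(\pi(\phi)^n-I\bigr)\cdot \tau(\psi).$$
Since $\tau$ is a homomorphism to a finitely generated abelian group, it is Lipschitz on $\T_g$ in the word metric, so $\|h_n\|_{\T_g}\geq c\,\|\tau(h_n)\|$. If $\tau(\psi)$ has nonzero projection onto an eigenspace of $\pi(\phi)$ (acting on $\Lambda^3\HH_1/\HH_1$) of eigenvalue $|\lambda|>1$, then $\|\tau(h_n)\|\geq C\lambda^n$, yielding the exponential lower bound. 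The same construction works verbatim for $\T_{g,1}$ using the Johnson homomorphism $\tau:\T_{g,1}\to\Lambda^3 \HH_1$.

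For the doubly exponential upper bound I would argue algorithmically via the \emph{Johnson filtration} $\T_g=\T_g^{(1)}\supset \T_g^{(2)}\supset\cdots$, where $\T_g^{(k)}$ is the kernel of the action of $\Mod_g$ on the nilpotent quotient $\pi_1(\Sigma_g)/\gamma_{k+1}\pi_1$, together with the higher Johnson homomorphisms $\tau_k$ embedding the graded pieces $\T_g^{(k)}/\T_g^{(k+1)}$ into lattices in fixed finite-dimensional $\Sp$-representations. Given $f\in\T_g$ with $\|f\|_{\Mod_g}\leq n$, one successively kills $f$ modulo $\T_g^{(k+1)}$ for $k=1,2,\ldots$. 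At level $k$ the image $\tau_k(f)$ has coefficients of size at most $C^{nk}$ (each $\Mod_g$-generator acts on the $k$-th nilpotent quotient by a matrix polynomial of degree $\leq k$ in its symplectic entries, which are $O(1)$), so killing this image costs at most $C^{nk}$ generators of $\T_g^{(k)}$. Only $O(n)$ levels contribute before the remaining factor sits in a ball of $\Mod_g$-radius bounded by a constant depending only on $g$, and hence has bounded $\T_g$-length. Summing, $\sum_{k=1}^{O(n)} C^{nk}\leq \exp\bigl(\exp(O(n))\bigr)$, giving the doubly exponential bound.

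The main obstacle I anticipate is the representation-theoretic input in the lower bound. A priori $\tau(\psi)$ could lie entirely in the stable subspace of $\pi(\phi)$, in which case $(\pi(\phi)^n-I)\tau(\psi)$ would remain bounded; one must therefore verify that $\psi$ can be chosen so that $\tau(\psi)$ has nontrivial component in an unstable eigenspace. This is where the irreducibility of $\Lambda^3\HH_1/\HH_1$ as an $\Sp(2g,\Q)$-module is used (and where $g\geq 3$ first matters, so that $\Lambda^3\HH_1/\HH_1$ is nonzero and has eigenvalues of modulus $>1$): the $\Sp$-orbit of any nonzero vector spans, so by replacing $\psi$ with a bounded-length conjugate $\gamma\psi\gamma^{-1}$ (which shifts $\|h_n\|_{\Mod_g}$ by at most an additive constant) one can always arrange the required nonzero projection. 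This is the substance of the ``representation-theoretic'' method advertised in the abstract. The upper bound, by contrast, should reduce to careful bookkeeping once the Johnson filtration is made effectively quantitative.
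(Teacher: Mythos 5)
Your lower bound is essentially the paper's argument: both use the Johnson homomorphism $\tau$, the equivariance $\tau(fhf^{-1}) = f_*\tau(h)$, and the observation that $\tau$ is Lipschitz on $\T_g$ so that exponential growth of $\|f_*^n\tau(h)\|$ forces exponential growth of $\|f^nhf^{-n}\|_{\T_g}$. The only cosmetic difference is that you take commutators $[\phi^n,\psi]$ rather than plain conjugates $f^n h f^{-n}$; both work, since $(\pi(\phi)^n - I)\tau(\psi)$ grows like $\pi(\phi)^n\tau(\psi)$. Your closing paragraph about using irreducibility of $\wedge^3 H/H$ and the $\Sp$-orbit of $\tau(\psi)$ to guarantee an unstable component is precisely the content of Propositions~\ref{proposition:lower_bound_distortion} and \ref{proposition:rep_criterion} in the paper (the paper cleanly packages this by first finding a partially hyperbolic $f_*$ on $\wedge^3 H/H$ via Borel density and Kronecker's theorem, then choosing $h$ so $\tau(h)$ has nonzero projection to an unstable generalized eigenspace).

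Your proposed upper bound, however, takes a genuinely different route and has serious gaps. You want to run an effective induction down the Johnson filtration $\T_g^{(1)} \supset \T_g^{(2)} \supset \cdots$, but this runs into at least three problems. First, for $k \geq 2$ it is a well-known open problem whether the groups $\T_g^{(k)}$ are even finitely generated, so the phrase ``costs at most $C^{nk}$ generators of $\T_g^{(k)}$'' has no established meaning; without a finite generating set one cannot convert a bound on the lattice image $\tau_k(f)$ into a bound on word length in $\T_g^{(k)}$. Second, the assertion that ``only $O(n)$ levels contribute before the remaining factor sits in a ball of bounded $\Mod_g$-radius'' is not justified: after each correction step you multiply $f$ by elements of $\T_g^{(k)}$ of rapidly growing $\Mod_g$-length, so the $\Mod_g$-radius of the remainder need not stay controlled, and nothing forces the process to terminate after $O(n)$ steps. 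Third, even ignoring these issues, your final estimate $\sum_{k=1}^{O(n)} C^{nk}$ is $\exp(O(n^2))$, which is far smaller than doubly exponential --- a signal that the bookkeeping is not tracking the actual cost. The paper avoids all of this: it proves a general fact (Proposition~\ref{proposition:distortion_upper_bound}) that for a finitely generated normal subgroup $K \normal \Gamma$ with $\Gamma/K$ finitely presented, the distortion of $K$ in $\Gamma$ is bounded by $\mu_P\, C^{\mu_D}$ in terms of a simultaneous (isoperimetric, isodiametric) pair $(\mu_P,\mu_D)$ for $\Gamma/K$; it then applies this with $\Gamma/K = \Sp(2g,\Z)$, getting an (exponential, exponential) pair from Leuzinger's equivariant retraction of the symmetric space of $\Sp(2g,\R)$ onto a cocompact subspace. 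This one-step cohomological/coarse-geometric input is what produces the doubly exponential bound without needing anything about the Johnson filtration beyond degree one.
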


We conjecture that the upper bound in
Theorem~\ref{thm:torelli_distortion} is, like the lower bound,
exponential.  The upper bound is strongly related to the isoperimetric
and isodiametric inequalities in $\Sp(2g,\Z)$.  Thurston has conjectured
that for $n \geq 4$ the group $\SL(n,\Z)$ satisfies a quadratic isoperimetric
inequality.  The analogous conjecture for $\Sp(2g,\Z)$ (together with a theorem
of Papasoglu) would imply our conjectured upper bound.  See \S
\ref{section:torelli_distortion} below, especially 
Remark~\ref{remark:symplectic_isos}, for a discussion.

For mapping class groups of surfaces with boundary components or
punctures, one can construct finitely generated, normal subgroups by
``filling in the punctures and boundary components''; these are the
so-called {\em surface braid groups} (see \cite{Birman}).  
For example, the {\em point pushing subgroup} 
$\pi_1(\Sigma_g)\normal \Mod_g^1$ is
the kernel of the surjection $\Mod_g^1 \surjection \Mod_g$ induced by
``filling in the puncture''.  
 
\begin{theorem}[Distortion of surface braid groups]
\label{thm:surface_braids_distortion}
Let $K$ be the kernel of the surjection 
$$\Mod_{g,b+n}^{p+m} \surjection \Mod_{g,b}^{p},$$ 
where at least one of $n$ and $m$ is strictly greater than 0. Then 
$K$ is exactly exponentially distorted in $\Mod_{g,b+n}^{p+m}$ for $g \geq 2$.
\end{theorem}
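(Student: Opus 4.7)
The plan is to prove the exponential upper and lower bounds separately, both driven by the action of pseudo-Anosov mapping classes on $K$.  We focus on the prototypical case $K = \pi_1(\Sigma_g) \normal \Mod_g^1$ (the point-pushing subgroup); the general case follows analogously via the Birman exact sequence for surface braid groups.

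For the lower bound, fix a pseudo-Anosov $\phi \in \Mod_g$ of dilatation $\lambda > 1$ together with a lift $\tilde{\phi} \in \Mod_g^1$, and fix a nontrivial element $x \in \pi_1(\Sigma_g) = K$.  The conjugate
$$
\tilde{\phi}^n P_x \tilde{\phi}^{-n} = P_{\tilde{\phi}^n(x)} \in K
$$
has $\Mod_g^1$-word-length at most $2n \Vert \tilde{\phi} \Vert_{\Mod_g^1} + \Vert P_x \Vert_{\Mod_g^1}$, linear in $n$.  On the other hand, the word metric on $K \cong \pi_1(\Sigma_g)$ is quasi-isometric to the hyperbolic translation length, and the class $\tilde{\phi}^n(x)$ has geodesic representative of length $\asymp \lambda^n$ by definition of a pseudo-Anosov.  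Hence $\Vert P_{\tilde{\phi}^n(x)} \Vert_K \gtrsim \lambda^n$, yielding at-least-exponential distortion.

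For the upper bound, fit $K$ into the short exact sequence
$$
1 \longrightarrow K \longrightarrow \Gamma \longrightarrow Q \longrightarrow 1,
$$
where $\Gamma = \Mod_{g,b+n}^{p+m}$ and $Q = \Mod_{g,b}^{p}$, and use a standard filling argument.  Given $h \in K$ with $\Vert h \Vert_\Gamma = L$, write $h$ as a word $w$ of length $L$ in generators of $\Gamma$.  Since the image of $w$ in $Q$ is trivial and $Q$ is finitely presented (Hatcher--Thurston), $w$ can be rewritten in the free group on these generators as a product of at most $\delta_Q(L)$ conjugates of relators of $Q$, where $\delta_Q$ is the Dehn function of $Q$; the conjugating subwords have $\Gamma$-length $O(L)$.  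Lifting back to $\Gamma$, $h$ becomes a product of $\delta_Q(L)$ conjugates of a fixed finite collection of elements of $K$ by elements of $\Gamma$-length $O(L)$.  The key geometric input is that conjugation by a single Dehn twist multiplies the $K$-length by at most a constant factor, since a Dehn twist acts on the surface braid group with uniformly bounded Lipschitz constant.  Iterating gives a factor of at most $C^L$, and combining with the at-most-polynomial Dehn function of $\Mod_{g,b}^{p}$ yields $\Vert h \Vert_K \leq \delta_Q(L) \cdot C^L \leq e^{O(L)}$.

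The main obstacle is establishing these two Lipschitz claims beyond the prototypical point-pushing case, where $K$ is simply a copy of $\pi_1(\Sigma)$.  In full generality $K$ is the fundamental group of a configuration space on $\Sigma_{g,b}^p$, so one must extract exponential growth from pseudo-Anosov dynamics on ordered tuples of points (strands) rather than on a single loop, and bound the effect of a single Dehn twist on a general surface braid.  Both can be handled by fixing a hyperbolic metric, taking geodesic representatives for each strand, and counting intersections with the twisting curve, with additional care needed for the framings when $b + n > 0$.
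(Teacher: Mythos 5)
Your upper bound is essentially the paper's: you apply a filling argument (the paper's Proposition~\ref{proposition:distortion_upper_bound}) to the Birman sequence using the quadratic isoperimetric and linear isodiametric inequalities for $\Mod_{g,b}^p$, which the paper derives from Mosher's theorem that mapping class groups are automatic (you cite Hatcher--Thurston for finite presentability, but the quantitative filling bounds really come from automaticity). The one point to be careful about, which you state but do not prove, is the claim that conjugating a fixed generator of $K$ by a generator of $\Gamma$ multiplies $K$-length by at most a constant; this is exactly the constant $C_2$ appearing in the paper's proof of Proposition~\ref{proposition:distortion_upper_bound}, and it is an easy finiteness observation rather than a geometric Lipschitz estimate, so you are overcomplicating it by reaching for geodesic representatives and intersection numbers.

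Your lower bound takes a genuinely different route. The paper constructs a $\Gamma$-equivariant surjection $\psi\colon K\to H^{n+m}$ (compose $K\to K'\cong\pi_1(\Config_{n+m}(\Sigma_{g,b}^p))\to \pi_1((\Sigma_g)^{n+m})\to H^{n+m}$), observes that $H^{n+m}$ contains the irreducible $\Sp(2g,\Z)$-module $H$, invokes Borel's theorem and Kronecker's theorem (via Proposition~\ref{proposition:rep_criterion}) to produce a partially hyperbolic element, and then applies Proposition~\ref{proposition:lower_bound_distortion}. You instead work directly with hyperbolic geometry: pick a pseudo-Anosov $\phi$ with dilatation $\lambda>1$, lift it, and observe that $\tilde\phi^n P_x\tilde\phi^{-n}=P_{\tilde\phi^n_\ast(x)}$ has $\Gamma$-length $O(n)$ while the word length of $\tilde\phi^n_\ast(x)$ in $\pi_1(\Sigma_g)$ is comparable to the geodesic length of the corresponding free homotopy class, which grows like $\lambda^n$ by Thurston's theory (not ``by definition,'' but a theorem). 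This is a correct and more elementary argument for the point-pushing case $K=\pi_1(\Sigma_g)$, and it has the virtue of giving an explicit geometric picture of where the distortion comes from. What the paper's approach buys is uniformity: by abelianizing down to $H^{n+m}$ it handles all $n,m,b,p$ at once without ever having to understand the word metric on the full surface braid kernel, whereas you explicitly defer the general case (``the general case follows analogously'') and flag in your final paragraph that extracting exponential growth from pseudo-Anosov dynamics on configuration spaces with multiple strands and framings requires additional work. As written, your proof is therefore complete only for $K=\pi_1(\Sigma_g)$; for the theorem as stated you would need to carry out that extension, and the paper's homomorphism $\psi$ is precisely the device that makes such an extension unnecessary.
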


\noindent
We prove Theorem~\ref{thm:surface_braids_distortion} 
in \S \ref{section:pointpushing_distortion}.  

In \S \ref{section:lagrangian_distortion} we construct ``relative
Johnson homomorphisms'', relative to arbitrary subgroups of
$\HH_1(\Sigma_g;\Z)$.  Applying these homomorphisms together with the general method for obtaining
lower bounds on distortion given in \S\ref{section:lowerbounds}, we
obtain Theorems~\ref{thm:lagrangian_distortion} and \ref{thm:pullback_distortion}
below as well as a general result, Theorem~\ref{thm:gentorellidistorted}, given in \S \ref{section:lagrangian_distortion}.

\begin{theorem}[Distortion of the Lagrangian subgroup]
\label{thm:lagrangian_distortion}
For $b \in \{0,1\}$, let $L < \Mod_{g,b}$ be the {\em Lagrangian
subgroup}, that is, the group of mapping classes which act trivially on
a fixed maximal isotropic subgroup of $\HH_1(\Sigma_{g,b};\Z)$.  Then
$L$ is at least exponentially distorted in $\Mod_{g,b}$ for $g \geq 4$.
\end{theorem}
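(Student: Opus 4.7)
The plan is to apply the general distortion lower-bound technique of \S\ref{section:lowerbounds} to a relative Johnson homomorphism for the Lagrangian subgroup, constructed in \S\ref{section:lagrangian_distortion}. Let $H = \HH_1(\Sigma_{g,b};\Z)$, fix a Lagrangian splitting $H = V \oplus V'$, and let $P \leq \Mod_{g,b}$ denote the preimage of the stabilizer $\mathrm{Stab}_{\Sp(2g,\Z)}(V)$, i.e.\ of the Siegel parabolic. Then $L$ is normal in $P$, and $P/L$ is commensurable with the Levi factor $\GL(V) \cong \GL(g,\Z)$. The relative Johnson construction should produce a $P$-equivariant homomorphism $\tau : L \to M$ into a finitely generated abelian group $M$ on which $P$ acts through $P/L$.

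With this setup in hand, the distortion bound follows from the standard conjugation trick. For any $f \in P$ and $h \in L$ one has $f^n h f^{-n} \in L$, with $\Vert f^n h f^{-n}\Vert_{\Mod_{g,b}}$ growing linearly in $n$. Since $\tau$ is a homomorphism into an abelian group, there is a constant $C > 0$ (depending on a choice of finite generating set for $L$) with $\Vert k \Vert_L \geq C^{-1} \Vert \tau(k) \Vert_{M\otimes\R}$ for every $k \in L$. Applied to $k = f^n h f^{-n}$, for which $\tau(k) = f^n \cdot \tau(h)$, this reduces the theorem to exhibiting $f \in P$ and $h \in L$ such that $\Vert f^n \cdot \tau(h) \Vert$ grows exponentially in $n$.

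To produce $f$, it suffices by surjectivity of $\Mod_{g,b} \to \Sp(2g,\Z)$ to exhibit a nontrivial irreducible $\GL(g,\R)$-subrepresentation $W \subseteq M \otimes \R$ together with a $\bar f \in \GL(g,\Z)$ acting on $V \otimes \R$ with a real eigenvalue of modulus strictly greater than $1$ — for instance, the companion matrix of a degree-$g$ Pisot polynomial. Any such $\bar f$ necessarily acts on the irreducible summand $W$ with at least one eigenvalue of modulus $>1$ by a standard weight argument for irreducible $\GL$-representations. For $h$, one uses that $\tau|_{\T_{g,b}}$ should capture a summand of the classical Johnson homomorphism, whose image is of full rank in its target, so that an explicit combination of bounding-pair or genus-one separating twists provides an element of $\T_{g,b} \leq L$ whose $\tau$-image has nontrivial $W$-component.

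The principal obstacle, and the place where the hypothesis $g \geq 4$ enters, is the representation-theoretic analysis of $M \otimes \R$ required to pin down the irreducible $\GL(g,\R)$-summand $W$ and to verify its nontriviality. This step is the Lagrangian analogue of the fact that $\Lambda^3 H / H$ is a nontrivial irreducible $\Sp(2g,\R)$-representation only once $g \geq 3$, which drives Theorem~\ref{thm:torelli_distortion}; the increased genus bound here reflects the more constrained nature of the Levi action of $\GL(g,\R)$ on the pieces of the relative Johnson target. Once this representation-theoretic analysis is carried out, assembling the pieces above with the criterion of \S\ref{section:lowerbounds} yields the claimed exponential lower bound.
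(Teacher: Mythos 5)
Your overall strategy matches the paper's exactly: restrict to the stabilizer $\Mod^W_{g,b}$ of the Lagrangian $W$, use a $\Mod^W_{g,b}$-equivariant relative Johnson homomorphism $\tau^W\colon L = \T^W_{g,b}\to M$, and run the conjugation trick from Proposition~\ref{proposition:lower_bound_distortion} together with the hyperbolicity criterion of Proposition~\ref{proposition:rep_criterion}. Your description of the quotient is also essentially right --- since an element preserving $W$, acting trivially on $H/W$, and preserving the symplectic pairing is forced to act trivially on $W$ as well, the quotient $\Mod^W_{g,b}/L$ really is (a subgroup of) the Levi factor $\GL(g,\Z)$.

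However, you have deferred precisely the step where the content lives, and there are two gaps worth flagging. First, you never establish that $L$ is finitely generated, which is needed for the distortion statement to even make sense; the paper handles this in Proposition~\ref{proposition:relativetorellifinite} (the image in $\Sp(2g,\Z)$ is an arithmetic group, hence finitely generated, and $\T_{g,b}$ is finitely generated by Johnson). Second, and more importantly, you describe identifying the irreducible $\GL(g,\R)$-summand of $M\otimes\R$ as ``the principal obstacle'' and leave it open, whereas in the paper this is immediate once you know the target of $\tau^W$: since $W$ is Lagrangian, the image $H^W$ of $H$ in $\wedge^3(H/W)$ vanishes (the symplectic form $\omega$ maps to $0$ in $\wedge^2(H/W)$; see Remark~\ref{remark:h_image}), so the relative Johnson map lands simply in $\HH_3(H/W;\Z)\cong\wedge^3(H/W)\cong\wedge^3\Z^g$. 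This is a fundamental representation of $\SL(g,\R)$ --- irreducible always, nontrivial exactly when $g\geq 4$ --- and Borel's Theorem~\ref{thm:irreducibles_agree} transfers irreducibility to $\SL(g,\Z)$, after which Proposition~\ref{proposition:rep_criterion} produces the partially hyperbolic element with no need for the explicit Pisot companion-matrix computation. So the missing ``representation-theoretic analysis'' is in fact a one-line identification once you have Theorem~\ref{theorem:relativejohnson} and Remark~\ref{remark:h_image} in hand; your plan is right but stops short of the computation that makes it go through and explains the bound $g\geq 4$.
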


\begin{remark}
The Lagrangian subgroup was first defined by Garoufalidis and Levine \cite{GarLevineFiniteType} and plays 
an important role in the theory of finite-type invariants of 3-manifolds.
\end{remark}

Masur-Minsky \cite{MM2} (see \cite{Ham1} for an alternate proof) proved
that for $h<g$ the natural inclusion $\Mod_{h,1}\hookrightarrow \Mod_g$
given by subsurface inclusion $\Sigma_{h,1}\hookrightarrow\Sigma_g$ is
linearly distorted (``undistorted'' in their terminology).  We also
remark that, as a consequence of \cite{LMR}, if $1<h<g$ then $\Sp(2h,\Z)$
is linearly distorted in $\Sp(2g,\Z)$.  One might therefore expect that 
the subgroup of $\Mod_g$ consisting of mapping classes
``homologically supported'' on $\Sigma_{h,1}$ have linear distortion.  
In contrast we have the following.

\begin{theorem}
\label{thm:pullback_distortion}
Suppose $g - h \geq 2$.  Let $\Sigma_{h,1}$ be an embedded subsurface of
$\Sigma_g$, and let $K$ be the pull-back to $\Mod_g$ of the
corresponding copy of $\Sp(2h,\Z)$ in $\Sp(2g,\Z)$.  Then $K$ is at
least exponentially distorted in $\Mod_g$.
\end{theorem}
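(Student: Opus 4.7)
The plan is to produce a family of elements in $K$ whose $\Mod_g$-word length grows linearly in a parameter $n$, while their $K$-word length grows exponentially in $n$; the lower bound is detected by a relative Johnson homomorphism. Note first that since the copy of $\Sp(2h,\Z)$ in $\Sp(2g,\Z)$ is precisely the stabilizer of the symplectic summand $V := \HH_1(\Sigma_{g-h,1};\Z)$, the subgroup $K$ consists of exactly those mapping classes acting trivially on $V$.

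I would next invoke the relative Johnson homomorphism attached to $V$, which is a homomorphism $\tau_V : K \to M$ to a finitely generated abelian $\Sp$-module $M$, equivariant for the conjugation action of the normalizer $N(K) \leq \Mod_g$; this action descends via $\pi : \Mod_g \to \Sp(2g,\Z)$ to the $\Sp$-action on $M$. Because $\Sp(2h,\Z)$ and $\Sp(2(g-h),\Z)$ act on orthogonal symplectic summands and hence commute inside $\Sp(2g,\Z)$, every mapping class supported in the complementary subsurface $\Sigma_{g-h,1}$ lies in $N(K)$. The hypothesis $g-h \geq 2$ is then used to produce such a $\phi \in \Mod_g$, supported in $\Sigma_{g-h,1}$, so that $\pi(\phi)$ acts on some irreducible $\Sp$-summand of $M \otimes \R$ with a real eigenvalue $\lambda > 1$.

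Fixing such a $\phi$ and choosing $x \in K$ with $\tau_V(x)$ having nonzero projection onto the $\lambda$-eigenspace, I would set
$$h_n := \phi^n \, x \, \phi^{-n} \in K.$$
Then $\|h_n\|_{\Mod_g} \leq 2n\|\phi\|_{\Mod_g} + \|x\|_{\Mod_g}$ is linear in $n$, while by equivariance
$$\tau_V(h_n) = \pi(\phi)^n \cdot \tau_V(x),$$
whose norm in $M \otimes \R$ grows at least like $\lambda^n$. Applying the general lower-bound lemma of \S\ref{section:lowerbounds}, which converts such representation-theoretic growth into a lower bound on $K$-word length, yields $\|h_n\|_K \gtrsim \lambda^n$, giving at least exponential distortion.

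The main obstacle is verifying that the image $\tau_V(K)$ genuinely meets an expanding eigendirection of some normalizing $\phi$ — that is, that $\tau_V(K) \otimes \R$ contains a vector with nonzero component in an irreducible $\Sp(2g,\R)$-summand of $M \otimes \R$ on which some element of $\Sp(2(g-h),\Z)$ acts with an eigenvalue off the unit circle. This amounts to a computation of the $\Sp$-module structure of the relative Johnson image, analogous to Johnson's description of $\tau(\T_g)$, and is exactly what the general result Theorem~\ref{thm:gentorellidistorted} (together with the representation-theoretic framework of \S\ref{section:lowerbounds}) is designed to supply; Theorem~\ref{thm:pullback_distortion} should then follow by specializing to this choice of $V$.
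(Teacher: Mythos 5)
Your proposal follows essentially the same route as the paper: you identify $K$ with the relative Torelli subgroup $\T^W_g$ for $W=\HH_1(\Sigma_{h,1};\Z)$, use the relative Johnson homomorphism $\tau^W$ (your $\tau_V$) whose target is $\HH_3(H/W;\Z)/H^W\cong(\wedge^3 V)/V$ with $V=H/W\cong\HH_1(\Sigma_{g-h,1};\Z)$, note that mapping classes supported on $\Sigma_{g-h,1}$ normalize $K$ and realize $\Sp(2(g-h),\Z)$ on $V$, and detect exponential growth of $K$-word length along a conjugate orbit $\phi^n x\phi^{-n}$. That is exactly what Proposition~\ref{proposition:lower_bound_distortion} does internally, so the structure is right.

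The place where the proposal stops short is precisely the place the paper must do real work. You name "verifying that the image $\tau_V(K)$ genuinely meets an expanding eigendirection" as the main obstacle and then defer to Theorem~\ref{thm:gentorellidistorted}; but that theorem assumes partial hyperbolicity as a hypothesis rather than supplying it, so the deferral does not close the gap. What is actually needed (and what the paper provides) is: (i) $\tau^W$ surjects onto $(\wedge^3 V)/V$ — this follows from Johnson's surjectivity for the classical Torelli group together with Condition~2 of Theorem~\ref{theorem:relativejohnson}; (ii) $(\wedge^3 V)/V\otimes\R$ is an irreducible $\Sp(2(g-h),\R)$-module, and then by Borel (Theorem~\ref{thm:irreducibles_agree}) irreducible already over $\Sp(2(g-h),\Z)$; (iii) the action of $\Mod^W_g$ on it has infinite image, so Proposition~\ref{proposition:rep_criterion} yields a partially hyperbolic element. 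Your sketch gestures at the $\Sp$-module structure but does not carry out these steps. You also do not address the finite generation of $K$, which is needed even to pose the distortion question and which the paper establishes via Proposition~\ref{proposition:relativetorellifinite}. Finally, a minor point: "nonzero projection onto the $\lambda$-eigenspace" should be onto the \emph{generalized} eigenspace, since the relevant matrix need not be diagonalizable — this is how Proposition~\ref{proposition:lower_bound_distortion} is set up. With these supplied, your argument becomes exactly the paper's.
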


We would like to know what happens when $g=h+1$, as our methods do not seem
to work in this case.

\paragraph{A first idea.}
The first key idea in this paper can be illustrated
by the following proof sketch of the lower bound in
Theorem~\ref{thm:torelli_distortion} in the case of $\T_g$ with $g \geq
3$.
\label{idea}
We begin with the standard exact sequence 
$$1 \longrightarrow \T_g \longrightarrow \Mod_g \longrightarrow \Sp(2g,\Z) \longrightarrow 1$$ 
coming from the action of $\Mod_g$ on
$H:=\HH_1(\Sigma_g;\Z) \cong \Z^{2g}$.  Let $U=\wedge^3H/H$.  Johnson
proved in \cite{Johnson1980} that there is a surjective homomorphism
$$\tau:\T_g \to U$$ 
which is equivariant with respect to the natural
actions of $\Mod_g$.  Here the $\Mod_g$-action on $U$ factors through
the standard $\Sp(2g,\Z)$ action, and the $\Mod_g$ action on $\T_g$ is 
the one induced by conjugation.  This equivariance is just the
formula
\begin{equation}
\label{eq:equiv}
\tau(fhf^{-1})=f_\ast \tau(h) \ \ \ \ \ \mbox{for all $f\in\Mod_g, h\in\T_g$,}
\end{equation}
\noindent
where $f_\ast$ denotes the induced action of $f\in\Mod_g$ on
$\HH_1(\Sigma_g;\Z)$.  

We can use $\tau$ to give lower bounds for word length in $\T_g$ as
follows.  Fix a finite generating set $S$ for $\T_g$, and let
$\Vert h\Vert_{\T_g}$ denote {\em word length} of $h$ with respect to
$S$, that is, the minimal number of elements of $S^{\pm 1}$ whose product equals
$h$.  Also, fix a norm $\Vert \cdot \Vert_U$ on $U$.
Since $S$ is finite, there exists some $C>0$ such that 
$\Vert \tau(s)\Vert_U\leq C$ for all $s\in S$.  Since $\tau$ is a homomorphism,
it is then clear that for every $h\in\T_g$ we have
\begin{equation}
\label{eq:compare}
\Vert h\Vert_{\T_g}\geq \frac{\displaystyle 1}{\displaystyle C}\Vert
\tau(h)\Vert_U.
\end{equation}

Now it is not hard to see that we can choose a mapping class $f \in \Mod_g$ with the property that
the linear transformation $f_{\ast} \in \GL(U)$ is {\em partially hyperbolic}, that is, there is some
eigenvalue $\lambda$ of $f_\ast$ with $|\lambda|>1$.  Using the partial hyperbolicity of $f$, we will 
find an $h \in \T_g$ such that $\Vert f_\ast^n(\tau(h))\Vert_U$ grows exponentially in $n$.
Then $\Vert f^n h f^{-n} \Vert_{\Mod_g}$ grows at most linearly with respect to $n$, but 
by (\ref{eq:equiv}) and (\ref{eq:compare}) we have
\begin{equation*}
\begin{split}
\Vert f^nhf^{-n}\Vert _{\T_g} &\geq \frac{\displaystyle 1}
{\displaystyle C}\Vert \tau(f^nhf^{-n})\Vert _U \\
&=\frac{\displaystyle 1}{\displaystyle C} \Vert f_\ast^n(\tau(h))\Vert_U, 
\end{split}
\end{equation*}
which grow exponentially.  See \S\ref{section:torelli_distortion} for details.

\paragraph{Some questions.}
This paper is a first attempt at a systematic study of
distortion in mapping class groups.  While the methods here apply to
many examples, there are also many examples to which they do not apply.
Of the many natural questions one might ask, we would like to point out
a particular one to which our methods do not seem to apply.  
Recall that the {\em handlebody group} is the subgroup of $\Mod_g$
consisting of elements which extend to a fixed handlebody.  Suzuki
\cite{Suzuki} proved that this group is finitely generated.

\begin{question}
What is the distortion of the handlebody subgroup in the mapping class group?
\end{question}

A natural question analogous to the direction of this paper is to study
and compute the distortions of orbits of subgroups $K<\Mod_g$ in the
Teichm\"{u}ller space $\Teich_g$ for $\Sigma_g$, say endowed with the
Teichm\"{u}ller metric; see \cite{FMo, KentLeininger} for related 
discussions.  Theorem~2.1 of \cite{FLM} (and the discussion following
it) give that any $\Mod_g$-orbit in $\Teich_g$ is exponentially
distorted.  Thus the problem of computing the distortion of $K$ in
$\Mod_g$ and of a $K$-orbit in $\Teich_g$ are {\it a priori} 
different.  We believe both questions are worth pursuing.

\paragraph{Acknowledgments.}
We are grateful to Danny Calegari for his help in
refining the picture for
Proposition~\ref{proposition:distortion_upper_bound}, to Jordan
Ellenberg for a helpful remark on the proof of Proposition~\ref{proposition:rep_criterion}, and
to Hanna Bennett for pointing out an error in a previous version of this paper.  We also thank the referee for several useful comments and corrections.


\section{Methods for bounding distortion in groups}

In this section, we give two general methods for bounding the distortion
of one group inside another, one yielding lower bounds and the other
upper bounds.  We will apply these methods throughout this paper to
subgroups of mapping class groups.  

\subsection{Behavior of distortion under inclusions}

Before we begin, we will need to know the following basic
property of distortion.

\begin{lemma}[Behavior of distortion under inclusions]
\label{lemma:stupid_distortion_lemma}
Let $K < \Gamma$ be finitely generated groups.  If $\Gamma < \Gamma'$ with $\Gamma'$ finitely generated,
then the distortion of $K$ in $\Gamma'$ is at least the distortion of $K$ in $\Gamma$.
\end{lemma}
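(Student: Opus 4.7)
The plan is a short definition chase: the inclusion $\Gamma \hookrightarrow \Gamma'$ is Lipschitz with respect to word metrics, so any ``shortcut'' from an element $h \in K$ to the identity that exists in $\Gamma$ persists, up to a constant factor, in $\Gamma'$. Hence the distortion of $K$ can only grow when we pass from $\Gamma$ to $\Gamma'$.

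First I would fix finite generating sets for $K$, $\Gamma$, and $\Gamma'$. Writing each generator of $\Gamma$ as a word in the generators of $\Gamma'$ and letting $C>0$ be the maximum of the lengths of these words, one obtains
\[
\Vert g \Vert_{\Gamma'} \;\leq\; C\,\Vert g \Vert_{\Gamma}
\qquad \text{for every } g \in \Gamma,
\]
and in particular for every $h \in K$. Let $\delta_{\Gamma}$ (resp.\ $\delta_{\Gamma'}$) denote the distortion of $K$ in $\Gamma$ (resp.\ in $\Gamma'$). Applying the defining inequality for $\delta_{\Gamma'}$ together with the Lipschitz bound above gives
\[
\Vert h \Vert_{K} \;\leq\; \delta_{\Gamma'}\bigl(\Vert h \Vert_{\Gamma'}\bigr) \;\leq\; \delta_{\Gamma'}\bigl(C\,\Vert h \Vert_{\Gamma}\bigr)
\qquad \text{for every } h \in K,
\]
where the second step uses that we may assume $\delta_{\Gamma'}$ is monotone (replacing it by $n \mapsto \max_{m \leq n} \delta_{\Gamma'}(m)$ if necessary). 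Since $\delta_{\Gamma}$ is the \emph{smallest} function with $\Vert h \Vert_K \leq \delta_{\Gamma}(\Vert h \Vert_{\Gamma})$ for all $h\in K$, this forces
\[
\delta_{\Gamma}(n) \;\leq\; \delta_{\Gamma'}(Cn) \qquad \text{for all } n.
\]

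Finally I would invoke the paper's convention, stated in the distortion discussion preceding the lemma, that distortion functions are compared only up to linear rescaling of the argument. Under this equivalence, the inequality $\delta_\Gamma(n) \leq \delta_{\Gamma'}(Cn)$ is precisely the statement that the distortion of $K$ in $\Gamma'$ is at least the distortion of $K$ in $\Gamma$. There is no real obstacle in this argument: it is essentially a definition chase, with the only minor care needed being the handling of the linear equivalence relation on distortion functions and the harmless monotonicity assumption on $\delta_{\Gamma'}$.
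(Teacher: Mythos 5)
Your argument is correct and is essentially the same definition chase as the paper's: the inclusion $\Gamma \hookrightarrow \Gamma'$ is Lipschitz on word metrics, so lengths in $\Gamma'$ are bounded by lengths in $\Gamma$. The only cosmetic difference is that the paper chooses $S_{\Gamma'} \supseteq S_\Gamma$ so that the Lipschitz constant is $1$ and the inequality $\Vert k\Vert_{\Gamma'} \leq \Vert k\Vert_\Gamma$ holds on the nose, whereas you keep a general constant $C$ and then appeal to the convention that distortion functions are compared up to linear rescaling.
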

\begin{proof}
Fix a finite generating set $S_\Gamma$ for $\Gamma$
and let $\Vert \cdot \Vert_\Gamma$ be the
associated word metric.  We can choose a finite generating
set $S_{\Gamma'}$ for $\Gamma'$ with $S_\Gamma \subset S_{\Gamma'}$.  Letting
$\Vert \cdot \Vert_{\Gamma'}$ be the
associated word metric, we have $\Vert k \Vert_\Gamma \geq \Vert k \Vert_{\Gamma'}$
for all $k \in K$, and the lemma follows.
\end{proof}

\subsection{Lower bounds via irreducible representations}
\label{section:lowerbounds}

If $K<G$, then to give a lower bound for the distortion of $K$ in $G$,
one must be able to give lower bounds on the word length in $K$.  This is
in general a very difficult problem, but for free abelian groups $V$ such bounds
can be easily obtained.  Moreover, the resulting linear algebra is a rich
source of examples of exponential growth.  We say that
an element of the automorphism group $\GL(V):=\Aut(V)$ is
{\em partially hyperbolic} if the corresponding linear
transformation of $V \otimes \C$ has some eigenvalue $\lambda$ with
$|\lambda|>1$.  We then have the following easy example of exponential
distortion.

\begin{example}
\label{example:mapping_torus}
Let $\Gamma$ be the semidirect product of $V=\Z^n$ with any subgroup 
of $\SL(n,\Z)$ which contains a partially hyperbolic matrix $A$.  Then $V$
has exponential distortion in $\Gamma$.  The upper bound is easy.  The 
exponential lower bound follows, as explained in the ``First idea'' on
page \pageref{idea} and given in detail in Proposition~\ref{proposition:lower_bound_distortion} below, from the exponential 
growth of the image of vectors in $V\otimes\R$ under iteration of $A$.  
\end{example}

\noindent
To generalize this example, we will map group/subgroup pairs to mapping 
tori of abelian groups like those in Example~\ref{example:mapping_torus},
using the abelian group as a sort of ``detector'' of exponential distortion.

\begin{proposition}[Criterion for exponential distortion]
\label{proposition:lower_bound_distortion}
Let $K$ be a finitely generated normal subgroup of a finitely
generated group $\Gamma$.  Suppose that $V$ is a free abelian group
equipped with a $\Gamma$-action $\rho:\Gamma \to \GL(V)$ and that
$\psi:K\to V$ is a surjective homomorphism which is
$\Gamma$-equivariant, where $\Gamma$ acts on $K$ by conjugation.  
If $\rho(\Gamma)$ contains a partially hyperbolic
matrix, then the distortion of $K$ in $\Gamma$ is at least
exponential.
\end{proposition}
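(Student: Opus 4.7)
The plan is to implement the strategy sketched on page~\pageref{idea}: use the equivariant surjection $\psi$ to convert word length in $K$ into norm in $V$, and exploit partial hyperbolicity of $\rho(f)$ for some $f\in\Gamma$ to cook up a sequence of conjugates $h_n = f^n h f^{-n}\in K$ whose $V$-images grow exponentially while their $\Gamma$-word length grows only linearly.

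First I would fix finite generating sets for $K$ and $\Gamma$ with associated word norms $\|\cdot\|_K$ and $\|\cdot\|_\Gamma$, and a norm $\|\cdot\|_V$ on $V\otimes\R$. Since $\psi$ is a homomorphism and the generating set of $K$ is finite, the triangle inequality yields a constant $C>0$ with
$$\|\psi(h)\|_V \leq C\|h\|_K \quad\text{for every } h\in K.$$
So any family $h_n\in K$ with $\|\psi(h_n)\|_V$ growing exponentially in $n$ automatically has $\|h_n\|_K$ growing at least exponentially in $n$.

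Next I would produce the test family. Choose $f\in\Gamma$ with $\rho(f)$ partially hyperbolic, let $\lambda\in\C$ be an eigenvalue of $\rho(f)$ on $V\otimes\C$ with $|\lambda|>1$, and decompose $V\otimes\C = E\oplus W$, where $E$ is the generalized $\lambda$-eigenspace and $W$ is the sum of the other generalized eigenspaces; both are $\rho(f)$-invariant. Since $V$ contains a $\C$-basis of $V\otimes\C$, the projection $\pi_E\colon V\otimes\C\to E$ cannot vanish on the lattice, so some $v_0\in V$ has $\pi_E(v_0)\neq 0$. Using the surjectivity of $\psi$, pick $h\in K$ with $\psi(h)=v_0$, and set $h_n := f^n h f^{-n}\in K$ (which lies in $K$ because $K$ is normal in $\Gamma$). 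The $\Gamma$-equivariance of $\psi$ then gives
$$\psi(h_n) = \rho(f)^n \psi(h) = \rho(f)^n v_0.$$
On $E$ the operator $\rho(f)$ has the form $\lambda I+N$ for some nilpotent $N$, so for the nonzero vector $\pi_E(v_0)$ one has $\|\rho(f)^n \pi_E(v_0)\| \geq c|\lambda|^n$ for large $n$ (any Jordan block only contributes a positive extra power of $n$). Since $\pi_E$ is a bounded operator commuting with $\rho(f)$, this forces $\|\rho(f)^n v_0\|_V \geq c'|\lambda|^n$ for some $c'>0$.

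Finally, combining the length comparison with these estimates, $\|h_n\|_K \geq c'|\lambda|^n/C$, while $\|h_n\|_\Gamma \leq 2n\|f\|_\Gamma+\|h\|_\Gamma$ is linear in $n$. By the definition of distortion, this forces the distortion function of $K$ in $\Gamma$ to dominate an exponential, as desired. The main obstacle, though a mild one, is guaranteeing the existence of a lattice vector $v_0\in V$ with nonzero projection onto the expanding generalized eigenspace and then extracting a clean exponential lower bound on $\|\rho(f)^n v_0\|_V$ despite the possible presence of nontrivial Jordan blocks; both issues are handled by the elementary spanning and asymptotic arguments sketched above.
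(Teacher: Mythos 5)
Your proposal is correct and follows essentially the same route as the paper's proof: decompose $V\otimes\C$ into generalized eigenspaces, use surjectivity of $\psi$ to find a preimage with nonzero projection onto the expanding eigenspace, and compare the exponential growth of $\|\rho(f)^n\psi(h)\|$ against the bound $\|\psi(k)\|\leq C\|k\|_K$ and the linear growth of $\|f^nhf^{-n}\|_\Gamma$. The only cosmetic differences are that the paper uses the full generalized eigenspace decomposition with an orthogonal inner product and cites a reference for the exponential lower bound, whereas you split off just the one eigenspace and argue via boundedness of the projection and a Jordan-form estimate.
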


\begin{remark}
When applying Proposition~\ref{proposition:lower_bound_distortion}, we will
find it useful to think of the exact sequence 
$$1 \longrightarrow K \longrightarrow \Gamma \longrightarrow \Gamma/K \longrightarrow 1$$
and to note that since $V$ is abelian and $\psi:K\to V$ is surjective we have
$$\rho(k) \cdot \psi(y) = \psi(kyk^{-1}) = \psi(k) + \psi(y) - \psi(k) = \psi(y) \ \ \ \ \mbox{for all $k,y \in K$}.$$
Thus the action of $K$ on $V$ is trivial.  Hence $\rho$ factors through a representation $\Gamma/K \to \GL(V)$. 
\end{remark}

\begin{remark}
By using Lemma~\ref{lemma:stupid_distortion_lemma}, Proposition~\ref{proposition:lower_bound_distortion} can be applied beyond the
context of normal subgroups.  See the proofs of Theorem~\ref{thm:lagrangian_distortion} and Theorem~\ref{thm:pullback_distortion} for
examples.
\end{remark}

\begin{proof}[Proof of Proposition~\ref{proposition:lower_bound_distortion}]
The hypothesis that $\psi$ is $\Gamma$-equivariant is precisely that
\begin{equation}
\label{eq:dist1}
\psi(xyx^{-1})=\rho(x) \cdot \psi(y) \ \ \ \ \mbox{for all $y \in K$ and $x \in \Gamma$}.
\end{equation}
By assumption, there exists some $x \in \Gamma$ such that
$$\rho(x) \otimes 1 : V \otimes \C \to V \otimes \C$$ 
has an eigenvalue $\lambda_1$ with $|\lambda_1|>1$.  Let $M = \rho(x) \otimes 1$ and let the 
distinct eigenvalues of $M$ be
$\lambda_1,\ldots,\lambda_k \in \C$.  For $1 \leq i \leq k$, let $E_k$ be the 
{\em generalized eigenspace} for $\lambda_i$, i.e.\ the
kernel of the map $(M - \lambda_i \mathbb{I})^{\Dim(V)}$, where $\mathbb{I}$ 
is the identity.  We have a direct sum decomposition $V \otimes \C = E_1 \oplus \cdots \oplus E_k$
which is invariant under $M$.  Endow $V \otimes \C$ with an inner product such that the $E_i$ are orthogonal
to each other and let $\Vert \cdot \Vert$ be the associated norm.  It is well-known (see, e.g., \cite[\S 2.2]{KatokHasselblatt})
that there exists some $C > 0$ such that if $v \in E_1$, then 
$\Vert M^n \cdot v \Vert \geq C |\lambda_1|^n \Vert v \Vert$ for all $n \geq 1$.

Let $S$ be a finite generating set for $K$ and let $S' \supseteq S$
be a finite generating set for $\Gamma$.  
Since $S$ is finite, there exists some $D \geq 0$ such that 
\begin{equation}
\label{eq:dist2}
\Vert\psi(s) \otimes 1\Vert \leq D \quad \quad \text{for each $s\in S$}.
\end{equation}  
Since $\psi$ is surjective, its image in $V \otimes \C$ must span $V \otimes \C$.  In particular, its image cannot
lie in $E_2 \oplus \cdots \oplus E_k$, so we can find some $y \in K$ such that 
$\psi(y) \otimes 1 \notin E_2 \oplus \cdots \oplus E_k$.  Let $y_1$ be the orthogonal projection of
$\psi(y) \otimes 1$ to $E_1$.  Using the fact that the $E_i$ are orthogonal and invariant under
$M$, we deduce that for all $n \geq 1$ we have
\begin{equation}
\label{eq:dist3}
\begin{split}
\Vert \psi(x^nyx^{-n}) \otimes 1\Vert &= \Vert M^n \cdot (\psi(y) \otimes 1) \Vert \\
&\geq \Vert M^n \cdot y_1 \Vert \\
&\geq C |\lambda_1|^n \Vert y_1 \Vert.
\end{split}
\end{equation}

Now (\ref{eq:dist2}) together with (\ref{eq:dist3}) implies that the 
word length $\Vert x^nyx^{-n}\Vert_K$ with respect to the generating set $S$ is at least 
$\frac{1}{D}\cdot C |\lambda_1|^n \Vert y_1 \Vert$, which grows
exponentially in $n$ since $|\lambda_1| > 1$ and $y_1 \neq 0$.  On
the other hand, since $x$ and $y$ have fixed word length with respect to
the generating set $S'$ for $\Gamma$, we see that the word length in $\Gamma$ of $x^nyx^{-n}$
grows at most linearly in $n$.  This completes the proof of the proposition.
\end{proof}

\noindent
The following (purely linear-algebra) 
proposition gives a useful way in practice to prove that 
an action contains some partially hyperbolic matrix.

\begin{proposition}[Representation-theoretic criterion for hyperbolicity]
\label{proposition:rep_criterion}
Let $\Gamma$ be a group and let $V$ be a free abelian group
equipped with a $\Gamma$-action.  Suppose that $V$ contains
a nontrivial $\Gamma$-submodule $W$ satisfying the following two properties:
\begin{enumerate}
\item $W$ is {\em irreducible} in the sense that if $W' < W$ is a
nontrivial $\Gamma$-submodule, then
$W' \otimes \R = W \otimes \R$.
\item The image of the map $\nu : \Gamma \to \GL(W)$ is infinite.
\end{enumerate}
Then $\nu(\Gamma)$ contains a partially hyperbolic matrix.
\end{proposition}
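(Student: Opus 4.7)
The plan is to argue by contradiction: assume that no element of $\nu(\Gamma)$ is partially hyperbolic, so that every eigenvalue of every $\nu(g)$ lies in the closed unit disc. Writing $n$ for the rank of $W$, each $\nu(g)\in\GL_n(\Z)$ has determinant $\pm 1$; hence the product of its eigenvalues has absolute value $1$, which forces every eigenvalue to lie exactly on the unit circle. The characteristic polynomial $p_g(x)=\det(xI-\nu(g))$ is monic with integer coefficients, so its roots are algebraic integers all of whose Galois conjugates lie on the unit circle; Kronecker's theorem then forces every eigenvalue to be a root of unity. Since the coefficients of $p_g$ are bounded integers (elementary symmetric functions of complex numbers of modulus one), only finitely many characteristic polynomials arise from $\nu(\Gamma)$; call this finite set $\mathcal{P}$.

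Next, let $G\subseteq\GL(W\otimes\Q)$ be the Zariski closure of $\nu(\Gamma)$, regarded as a $\Q$-algebraic group, and let $G^0$ be its identity component. The $\R$-irreducibility hypothesis passes to $\Q$-irreducibility of $W\otimes\Q$ (any $\nu(\Gamma)$-invariant $\Q$-subspace intersected with $W$ gives a $\Gamma$-submodule of $W$), and Zariski closure preserves this since invariant $\Q$-subspaces for $\nu(\Gamma)$ coincide with those for $G$. The regular map $G\to\mathbb{A}^n$ sending $g\mapsto p_g$ lands in the closed finite set $\mathcal{P}$, so it is constant on the connected variety $G^0$; since $1\in G^0$, this constant value must be $(x-1)^n$. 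Hence every element of $G^0$ is unipotent, and $G^0$ is a connected unipotent $\Q$-algebraic group.

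Finally, by the Lie-Kolchin theorem, the fixed subspace $(W\otimes\Q)^{G^0}$ is nonzero. Since $G^0$ is normal in $G$, this fixed subspace is $G$-invariant, and the irreducibility established above forces it to be all of $W\otimes\Q$. As $G$ acts faithfully on $W\otimes\Q$, this means $G^0=\{1\}$, so $G$ is zero-dimensional and hence finite, contradicting the hypothesis that $\nu(\Gamma)$ is infinite. The main technical point I anticipate checking carefully is that Lie-Kolchin yields a fixed vector over $\Q$ rather than merely over $\bar\Q$: the fixed subspace scheme of the unipotent $\Q$-group $G^0$ is itself defined over $\Q$, so nonvanishing after base change to $\bar\Q$ descends to $\Q$. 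Everything else---the passage from $\R$-irreducibility to $\Q$-irreducibility, and the constancy of a morphism to a finite set on a connected variety---is routine.
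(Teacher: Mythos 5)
Your proof is correct, and it takes a genuinely different route from the paper's. Both arguments reduce to showing that if $\nu(\Gamma)$ has no partially hyperbolic element then it is virtually unipotent and hence (by a Kolchin-type theorem plus the irreducibility hypothesis) finite. The difference lies in how the unipotence is extracted. The paper invokes a theorem of Morris (Theorem 5.61 of \cite{Mo}): there is a level $L$ such that no element of the congruence subgroup $\GL(n,\Z)[L]$ has a nontrivial root of unity as an eigenvalue. Pulling back $\GL(n,\Z)[L]$ under $\nu$ gives a finite-index normal subgroup $\Gamma' \normal \Gamma$ with $\nu(\Gamma')$ unipotent, and the paper then applies Kolchin's theorem for \emph{abstract} unipotent subgroups of $\GL(n,\R)$ and uses normality of $\Gamma'$ in $\Gamma$. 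You instead observe directly that the characteristic polynomial of each $\nu(g)$ has integer coefficients that are bounded (being elementary symmetric functions of unimodular complex numbers), so only finitely many characteristic polynomials occur; since the characteristic-polynomial morphism on the Zariski closure $G$ therefore lands in a finite set, it is constant on the connected identity component $G^0$, forcing $G^0$ to be unipotent, and then Lie--Kolchin plus normality of $G^0$ in $G$ plus irreducibility forces $G^0 = \{1\}$, so $G$ and hence $\nu(\Gamma)$ is finite. Your route buys self-containedness: it replaces the nontrivial arithmetic-group input (Morris 5.61) with a short elementary observation about bounded integer coefficients, at the modest cost of working with Zariski closures and the identity component rather than with abstract subgroups. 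One small remark: once you have the finiteness of characteristic polynomials (which follows already from unimodular eigenvalues and integrality), the appeal to Kronecker's theorem is not actually needed in your argument; it is correct but redundant.

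Two minor points worth spelling out when writing this up. First, the passage from the paper's irreducibility hypothesis to $\Q$-irreducibility of $W \otimes \Q$ is as you say: if $U \subset W \otimes \Q$ is a nonzero $\nu(\Gamma)$-invariant $\Q$-subspace, then $U \cap W$ is a nonzero $\Gamma$-submodule of $W$ (clear denominators), so $(U \cap W) \otimes \R = W \otimes \R$, hence $U \supseteq (U \cap W) \otimes \Q = W \otimes \Q$. Second, the descent of the fixed subspace from $\bar\Q$ to $\Q$ is as you anticipate: $(W \otimes \Q)^{G^0}$ is cut out over $\Q$ by the linear conditions $g v = v$, and its dimension is invariant under base change, so nonvanishing over $\bar\Q$ (from Lie--Kolchin) implies nonvanishing over $\Q$.
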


\begin{remark}
In all the examples we consider in this paper, one could prove that
the relevant representations contain partially hyperbolic matrices
by hand; however, we believe that Proposition \ref{proposition:rep_criterion}
gives a conceptual reason for the ubiquity of representations with this property.
\end{remark}

\begin{proof}[{Proof of Proposition \ref{proposition:rep_criterion}}]
Choosing a basis, we can identify $\GL(W)$ with $\GL(m,\Z)$ for some
$m\geq 1$.  If a matrix $A\in\GL(W)$ is not partially hyperbolic, then
every eigenvalue $\lambda$ of $A$ satisfies $|\lambda|=1$.
Since $A$ has integer entries, each of its eigenvalues is an algebraic
integer.  An old theorem of Kronecker (see, e.g., \cite{Gr}) 
states that if $\lambda$ is any algebraic integer 
with $|\lambda|=1$ and with $|\lambda'|=1$ for every Galois conjugate
$\lambda'$ of $\lambda$, then $\lambda$ is a root of unity.  

Let $\GL(n,\Z)[L]$ denote the {\em level $L$ congruence subgroup} of
$\GL(n,\Z)$, which consists of those $A\in\GL(n,\Z)$ for which, when the
entries of $A$ are taken $\text{mod $L$}$, the result is the identity
matrix.  Theorem 5.61 of \cite{Mo} states that there exists some $L>1$
such that no eigenvalue of any element of $\GL(n,\Z)[L]$ is a nontrivial root of
unity.  Pulling back $\GL(n,\Z)[L]$ via $\nu$ then gives us
a finite index subgroup $\Gamma'$ of $\Gamma$ with the property that no
eigenvalue of any element of $\nu(\Gamma')$ is a nontrivial root of
unity.  Since we are assuming (assumption (2)) that $\nu(\Gamma)$ is 
infinite, we know $\nu(\Gamma')$ is nontrivial (indeed infinite).

Suppose $\nu(\Gamma')$ contains no partially hyperbolic matrix.  Then by
the above two paragraphs, every $A\in\nu(\Gamma')$ must be
{\em unipotent}, that is, $A$ has $1$ as its only eigenvalue.  But any
subgroup of $\GL(n,\R)$ consisting of unipotent matrices must be
nilpotent, and indeed conjugate into the upper triangular group with
$1$'s on the diagonal (see, e.g., Corollary 17.5 of \cite{Hu}).  Setting
$$U=\{\text{$w\in W$ $|$ $\Gamma'\cdot w=w$}\},$$
this implies that $U \neq 0$.
Since $\nu(\Gamma)$ is infinite (assumption (2)), we have $U \neq W$, for otherwise
$\nu(\Gamma')$ would be trivial and so $\nu(\Gamma)$ would be finite.  Also, since
$\Gamma'$ is a normal subgroup of $\Gamma$, we have that $\Gamma \cdot U \subset U$; indeed,
for $h \in \Gamma$ and $u \in U$, we have for all $g \in \Gamma'$ that
$$g \cdot (h \cdot u) = h \cdot ((h^{-1} g h) \cdot u) = h \cdot u,$$
so $h \cdot u \in U$.  We conclude that $\nu(\Gamma)$ is reducible, contradicting assumption (1).  Thus
$\nu(\Gamma') \subset \nu(\Gamma)$ must contain some partially hyperbolic matrix, and we are done.
\end{proof}

To apply Proposition~\ref{proposition:rep_criterion}, we will need a
method for establishing the irreducibility of the action of the discrete
group $\Gamma$.  In the cases which arise in this paper, this action
will factor through various arithmetic subgroups (such as $\Sp(2g,\Z)$)
of semisimple Lie groups, and hence the following theorem of Borel can be
applied.

\begin{theorem}[{\cite[Proposition~3.2]{Borel}}]
\label{thm:irreducibles_agree}
Let $G$ be a connected, semisimple, real algebraic group
which is defined over $\Q$ and which has no compact factors (for example,
$G=\Sp(2g,\R)$ for $g \geq 1$ or $G=\SL(n,\R)$ for $n \geq 2$).  
Let $V$ be a finite dimensional vector space over $\R$ which
is an irreducible $G$-module.  Then $V$ is also an irreducible
$G_\Z$-module, where $G_\Z$ is the group of integer points of $G$.
\end{theorem}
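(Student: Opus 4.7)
The plan is to deduce this from the Borel density theorem. The key observation is that for a $G_\Z$-invariant subspace $W \subset V$, the set of $g \in G$ with $g \cdot W \subset W$ is a Zariski closed subgroup of $G$ containing $G_\Z$. If we can show that $G_\Z$ is Zariski dense in $G$, then this stabilizer must equal $G$, so $W$ is $G$-invariant, and irreducibility of $V$ as a $G$-module forces $W = 0$ or $W = V$.

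So the heart of the matter is Zariski density. First, I would cite the Borel--Harish-Chandra theorem: for a connected semisimple real algebraic group $G$ defined over $\Q$, the group $G_\Z$ of integer points is a lattice in $G$, meaning that $G_\Z$ is a discrete subgroup of $G$ of finite Haar covolume. Given this, I would apply the Borel density theorem, which states that under exactly the stated hypotheses (connected, semisimple, no compact factors) any lattice $\Lambda < G$ is Zariski dense in $G$. This is where the no-compact-factors hypothesis enters in an essential way; without it one could project to a compact factor and obtain a closed (and therefore Zariski closed) image properly contained in $G$.

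Putting these two inputs together, $G_\Z$ is Zariski dense in $G$, and the stabilizer argument above completes the proof. The hard part is the Borel density theorem itself, whose standard proof uses the Chevalley construction to realize any subspace (or, more generally, any algebraic subvariety fixed setwise) as a point in a projective variety $X$ on which $G$ acts, and then invokes the Furstenberg/Poincar\'e recurrence style argument: a probability measure on $X$ invariant under a lattice in a semisimple group without compact factors must be supported on $G$-fixed points. I would not reproduce this argument in detail but simply invoke it as a black box.

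A minor point worth noting in the write-up is that the statement is about real vector spaces but the Zariski density argument works uniformly: once $W \subset V$ is $G_\Z$-stable, the $\R$-subspace $W$ determines a closed subvariety of the Grassmannian on which $G$ acts algebraically, and the stabilizer is Zariski closed in $G(\R)$ (equivalently in the ambient algebraic group). Thus the only genuine input beyond linear algebra is the Zariski density of $G_\Z$ supplied by Borel--Harish-Chandra together with Borel density.
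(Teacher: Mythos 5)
Your proposal is correct and matches the route the paper itself endorses: the statement in the paper is cited to Borel's Proposition~3.2, but the remark immediately following it observes that the hypotheses of that proposition (stated for subgroups with ``property (S)'') are verified for $G_\Z$ via Borel--Harish-Chandra, and further notes that the theorem ``can be easily deduced from the Borel Density Theorem.'' Your argument -- Borel--Harish-Chandra gives that $G_\Z$ is a lattice, Borel density gives Zariski density of that lattice (using the no-compact-factors hypothesis), and the stabilizer of a $G_\Z$-invariant subspace is Zariski closed and hence all of $G$ -- is exactly that ``easy deduction'' spelled out, so you have taken essentially the same approach.
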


\begin{remark}
The conditions in \cite[Proposition~3.2]{Borel} do not mention integer points, but instead
refer to subgroups satisfying a certain ``property (S)''.  However, it is easy to see that
lattices satisfy this property, and a well-known theorem of Borel-Harish-Chandra says that
arithmetic subgroups are lattices.  We also remark that \cite[Proposition~3.2]{Borel} is
one of the steps in the proof of an early version of the Borel Density Theorem, but later
proofs do not make use of this result, and Theorem~\ref{thm:irreducibles_agree} can 
be easily deduced from the Borel Density Theorem.
\end{remark}


\subsection{Isoperimetric inequalities and upper bounds on distortion}
\label{section:upper_bounds_on_distortion}

The goal of this subsection is to prove that an upper bound for
the distortion of a normal subgroup $K\normal \Gamma$ can be obtained
from isoperimetric and isodiametric functions on the quotient group 
$\Gamma/K$.  We first recall some definitions.  While the notion of isoperimetric
function and isodiametric function for a group are usually defined as
independent quantities (see, e.g., \cite{Gersten1993}), we will be
interested in their simultaneous realization, as discussed in 
\cite{GR} and in \cite[Definition 2.2.2]{RileyNotes}.  We refer the
reader to these references for background on this topic.

\begin{definition}[Simultaneous (isoperimetric,isodiametric) pairs] 
\label{definition:isoperimetric_inequality}
Let $\Gamma \cong \langle S | R \rangle$ be a finitely-presented group.  Denote
the free group on $S$ by $F(S)$.  Also, denote the word length of an element
$w$ in $F(S)$ by $\ell(w)$.  We say that a pair of
functions $(\mu_P,\mu_D)$ is a {\em simultaneous (isoperimetric,isodiametric) pair 
for $\Gamma$} if for all $w\in F(S)$ representing the identity in $\Gamma$, we can write 
$w$ as a word
$$w = \prod_{i=1}^{N} x_i r_i x_i^{-1},\ \ x_i\in F(S), r_i\in R$$
such that
$$N\leq \mu_P(\ell(w))\ \ \mbox{and}\ \ \ell(x_i)\leq \mu_D(\ell(w))\ \
\mbox{for each $1\leq i\leq N$}.$$
\end{definition}

\begin{remark}
Though the precise functions in an (isoperimetric,isodiametric) pair for
a group depend on the choice of finite generating set, changing the
generators preserves the equivalence classes of these functions up to
linear substitution.  Thus as with distortion we will consider
(isoperimetric,isodiametric) pairs only up to their well-defined
equivalence classes.
\end{remark}

\noindent
The main result of this subsection is the following, which 
is a small generalization of a theorem of Arzhantseva and Osin \cite[Lemma 3.6]{ArzhantsevaOsin}.

\begin{proposition}[Upper bound on distortion]
\label{proposition:distortion_upper_bound}
Let $K \normal \Gamma$ be a finitely generated, normal subgroup of a
finitely generated group $\Gamma$.  Suppose that $\Gamma / K$ is finitely presented and that $(\mu_P,\mu_D)$ is a simultaneous
(isoperimetric,isodiametric) pair for $\Gamma / K$.  Then there exists some $C>0$
such that the distortion of $K$ in $\Gamma$ is at most $\mu_P C^{\mu_{D}}$.
\end{proposition}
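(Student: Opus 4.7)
The plan is to translate an isoperimetric/isodiametric filling of the image of $h$ in $\Gamma/K$ into an expression for $h$ in $\Gamma$ as a short product of conjugates of fixed lifts of relators, and then to bound the $K$-word-length of each conjugate using normality of $K$ in $\Gamma$. First, I would coordinate generating sets: fix a finite generating set $S_K$ for $K$, a finite generating set $\bar T$ for $\Gamma/K$ with a finite presentation $\langle \bar T \mid R\rangle$ realizing the pair $(\mu_P,\mu_D)$, and a lift $T \subset \Gamma$ of $\bar T$ (in bijection with $\bar T$); then take $S_\Gamma := T \cup S_K$ as the generating set for $\Gamma$. This makes the letterwise projection $\pi : F(S_\Gamma) \to F(\bar T)$ well-defined. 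For each $r \in R$, its canonical lift $\tilde r \in F(T)$ evaluates in $\Gamma$ to an element of $K$, and finiteness of $R$ yields a uniform constant $C_1$ bounding the $S_K$-word-lengths of those evaluations.

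The crucial auxiliary estimate comes from normality of $K$: setting
$$C_2 := \max \bigl\{\Vert s\sigma s^{-1}\Vert_K : s \in S_\Gamma^{\pm 1},\ \sigma \in S_K^{\pm 1}\bigr\},$$
a straightforward induction on $\Vert x\Vert_\Gamma$ gives $\Vert xkx^{-1}\Vert_K \leq \Vert k\Vert_K \cdot C_2^{\Vert x\Vert_\Gamma}$ for all $x\in\Gamma$ and $k\in K$. Given $h\in K$ with $\Vert h\Vert_\Gamma = n$, I would write $h$ as a length-$n$ word $w$ in $S_\Gamma^{\pm 1}$, form $\bar w := \pi(w) \in F(\bar T)$ (of length at most $n$, representing the identity in $\Gamma/K$), and apply the hypothesis to obtain
$$\bar w \;=\; \prod_{i=1}^{N}\bar x_i\, r_i\, \bar x_i^{-1} \qquad \text{in } F(\bar T),$$
with $N \leq \mu_P(n)$ and each $\ell(\bar x_i) \leq \mu_D(n)$. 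Lifting each $\bar x_i$ to $x_i \in F(T)$ via $T\leftrightarrow \bar T$ and each $r_i$ to $\tilde r_i$ produces a literal identity in $F(T)$ whose evaluation $h^\ast \in K$ differs from $h$ only by an error $k_0 \in K$ coming from the $S_K$-letters of $w$; pulling those letters out one at a time and applying the conjugation estimate bounds $\Vert k_0\Vert_K$.

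The same conjugation estimate bounds each $\Vert \tilde x_i \tilde r_i \tilde x_i^{-1}\Vert_K$ by $C_1 \cdot C_2^{\mu_D(n)}$, so summing over the $N \leq \mu_P(n)$ terms yields $\Vert h\Vert_K \leq \mu_P(n)\cdot C^{\mu_D(n)}$ after choosing $C$ large enough to absorb both $C_1$ and the $k_0$ contribution. The main obstacle is this lifting step: the decomposition in $F(\bar T)$ is purely a formal identity in the free group, and making its lift evaluate to $h$ on the nose in $\Gamma$ (not merely modulo $K$) requires careful bookkeeping around the $S_K$-letters of $w$. Those letters lie in $K$ but must be measured in the $K$-word-metric rather than the $\Gamma$-word-metric, so the conjugation estimate is essential for showing that the discrepancy is comparable in size to the main term and can be absorbed into $C^{\mu_D(n)}$.
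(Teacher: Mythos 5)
Your overall strategy is sound and, for the applications in the paper, would give the same final bounds; but as a proof of the proposition as stated, it has a real gap at precisely the point you flag. The gap is in the claim that the ``$S_K$-letter error'' $k_0$ can be absorbed into $C^{\mu_D(n)}$. In your setup, to express $h = \pi_\Gamma(w)$ in terms of $h^\ast = \pi_\Gamma(\tilde w)$ and $k_0$, you must push each $S_K$-letter of $w$ past a suffix of $T$-letters of $w$. Those suffixes have $\Gamma$-length as large as $n = \Vert h\Vert_\Gamma$, so your conjugation estimate only gives $\Vert k_0\Vert_K \leq n\,C_2^{\,n}$ --- exponential in $n$, not in $\mu_D(n)$. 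This is not dominated by $\mu_P(n)C^{\mu_D(n)}$ when the presentation $\langle \bar T\mid R\rangle$ of $\Gamma/K$ has a sublinear isodiametric function (e.g.\ $\Gamma/K$ free with $\mu_D$ constant). So the absorption step fails, and the argument as written does not establish the stated bound.

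The paper's proof sidesteps this entirely by a different choice of presentation: it takes a finite presentation $\Gamma/K \cong \langle S_\Gamma\mid R\rangle$ whose generating set is the given generating set of $\Gamma$ itself (not a lift of a generating set of $\Gamma/K$). Then the geodesic word $w\in F(S_\Gamma)$ for $h$ already reads as a word in the presentation alphabet and represents the identity in $\Gamma/K$, so one fills $w$ directly: $w=\prod_{i=1}^N x_i r_i x_i^{-1}$ in $F(S_\Gamma)$ with $\ell(x_i)\leq\mu_D(n)$, and $\Vert h\Vert_K\leq\sum_i\Vert\pi(x_i r_i x_i^{-1})\Vert_K\leq \mu_P(n)C^{\mu_D(n)}$. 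No stripping of $S_K$-letters occurs and no error term appears; the role you wanted the $k_0$ bookkeeping to play is handled automatically by the relators (including any $s_K\in S_K$ killed in $\Gamma/K$) appearing in the filling with conjugators bounded by $\mu_D(n)$ rather than by $n$. If you modify your argument to present $\Gamma/K$ over $S_\Gamma = T\cup S_K$ instead of over $\bar T$, your decomposition collapses to the paper's and the gap disappears.
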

\begin{proof}
Let $S_\Gamma$ and $S_K$ be finite generating sets for $\Gamma$ and $K$ respectively, and let $F(S_\Gamma)$ be the free group on $S_\Gamma$.   Let $\Vert\cdot\Vert_{\Gamma}: \Gamma \to \N$ be the $S_\Gamma$ word metric and $\Vert \cdot \Vert_K : K \to \N$ be the $S_K$ word metric. Let $\ell: F(S_\Gamma) \to \N$ give the word length in $F(S_\Gamma)$ and $\pi : F(S_\Gamma) \to \Gamma$
be the natural projection.  Finally, choose $R \subset F(S_\Gamma)$ such that $\Gamma / K \cong \langle S_{\Gamma} | R \rangle$ is a finite presentation.

We begin by claiming that there is some $C>0$ such that if $r \in R$ and $x \in F(S_\Gamma)$, then
$$\Vert \pi(x r x^{-1}) \Vert_K < C^{\ell (x)}.$$
Indeed, an easy induction shows that
if
$$C_1 = \Max\{\text{$\Vert\pi(r)\Vert_K$ $|$ $r \in R$}\},$$
and 
$$C_2 = \Max\{\text{$\Vert\pi(s_\Gamma s_K s_\Gamma^{-1})\Vert_K$ $|$ $s_\Gamma \in S_\Gamma$ and $s_K \in S_K$}\},$$
then $C = C_1C_2$ satisfies the claim.

Now given $k \in K$ choose an efficient word $w \in F(S_\Gamma)$ such that $\pi(w) = k$ and $\ell (w) = \Vert k \Vert_\Gamma$.
By Definition~\ref{definition:isoperimetric_inequality}, we can write
$$w = \prod_{i=1}^{N} x_i r_i x_i^{-1},\ \ x_i\in F(S_{\Gamma}),\ \ r_i\in R$$
with
$$N\leq \mu_P(\ell ( w ) )\ \ \mbox{and}\ \ \ell (x_i) \leq \mu_D(\ell ( w ))\ \
\mbox{for each $1\leq i\leq N$}.$$
We then calculate:
\begin{align*}
\Vert k \Vert_K &=    \Vert\pi (w) \Vert_K \\
                &\leq \sum_{i=1}^N \Vert\pi (w_i r_i w_i^{-1})\Vert_K \\
                &\leq \sum_{i=1}^N C^{\mu_D(\ell (w) )} \\
                &\leq \mu_P(\ell (w)) C^{\mu_D(\ell (w))} \\
                &=    \mu_P(\Vert k \Vert_\Gamma) C^{\mu_D(\Vert k \Vert_\Gamma)},
\end{align*}
as desired.
\end{proof}


\section{Distortion of the Torelli group}
\label{section:torelli_distortion}

In this section, we apply Propositions~\ref{proposition:lower_bound_distortion} and
\ref{proposition:distortion_upper_bound} to give lower and upper bounds on the
distortion of $\T_{g,b}$ in $\Mod_{g,b}$ for $b \in \{0,1\}$ and $g \geq 3$.

\begin{proof}[Proof of Theorem~\ref{thm:torelli_distortion}]
Consider the standard exact sequence
\begin{equation}
\label{torseq}
1 \longrightarrow \T_{g,b} \longrightarrow \Mod_{g,b} \longrightarrow \Sp(2g,\Z) \longrightarrow 1. 
\end{equation}

We begin with the lower bound.  Set $H = \HH_1(\Sigma_{g,b};\Z)$.  In \cite{Johnson1980}, Johnson constructed the
{\em Johnson homomorphisms}, which are for $g \geq 3$ surjective 
$\Mod_{g,b}$-equivariant homomorphisms
$$\tau : \T_{g,1} \to \wedge^3 H$$ 
and 
$$\tau : \T_g \to (\wedge^3 H) / H$$
Here $H$ is embedded in $\wedge^3 H$ as
$H \wedge \omega$, where $\omega = a_1 \wedge b_1 + \cdots + a_g \wedge
b_g$ for any symplectic basis $\{a_1,b_1,\ldots,a_g,b_g\}$ of $H$.
The action of $\Mod_{g,b}$ on $(\wedge^3 H) / H$ or $\wedge^3 H$ factors surjectively through the infinite group $\Sp(2g,\Z)$.

Now $H \otimes \R$ is an irreducible representation of $\Sp(2g,\R)$.  By Theorem~\ref{thm:irreducibles_agree} it is also an irreducible $\Sp(2g,\Z)$-representation.  We can apply Proposition~\ref{proposition:rep_criterion} to the inclusion $H \subset \wedge^3 H$ to show that some element of $\Mod_{g,1}$ acts partially hyperbolically on $\wedge^3 H$.
Similarly, $((\wedge^3 H) / H) \otimes \R$ is an irreducible representation of $\Sp(2g,\R)$. By Theorem~\ref{thm:irreducibles_agree} it is also an irreducible $\Sp(2g,\Z)$-representation. We can apply Proposition~\ref{proposition:rep_criterion} with $W=V=(\wedge^3 H) / H$ to show that some element of $\Mod_{g,0}$ acts partially hyperbolically on $(\wedge^3 H) / H$.

Now apply Proposition~\ref{proposition:lower_bound_distortion} to $\T_{g,b} \normal \Mod_{g,b}$ with the homomorphism $\psi$ equal to the relevant
Johnson homomorphism to conclude that $\T_{g,b}$ is at least exponentially
distorted in $\Mod_{g,b}$ for $g \geq 3$ and $b \in \{0,1\}$, as desired.

We now establish the upper bound.  As explained in \S 5 of
\cite{Leuzinger} (see in particular Corollary~5.4), the nonpositively
curved symmetric space $X$ for $\Sp(2g,\R)$ admits an
$\Sp(2g,\Z)$-equivariant retraction $r:X\to\Omega$ onto a submanifold with boundary
$\Omega$ on which $\Sp(2g,\Z)$ acts cocompactly and properly by
isometries (in the path metric).  Now $X$ is a nonpositively curved
Riemannian manifold, and so it has a simultaneous
(isoperimetric,isodiametric) function which is (quadratic, linear).  The
retraction $r$ distorts lengths and hence volumes by at most an
exponential factor.  It follows that $\Omega$, hence $\Sp(2g,\Z)$, has
at worst a simultaneous (exponential,exponential)
(isoperimetric,isodiametric) pair.  While \cite{Leuzinger} explains this
only for isoperimetric functions, the argument for the
(isoperimetric,isodiametric) pair follows exactly his argument.

Proposition~\ref{proposition:distortion_upper_bound} applied to exact sequence (\ref{torseq})
therefore implies that for $g \geq 3$ and $b \in \{0,1\}$, the group
$\T_{g,b}$ is at most doubly exponentially distorted in $\Mod_{g,b}$, as
desired.
\end{proof}

 \begin{remark}
\label{remark:symplectic_isos}
The isoperimetric/isodiametric bounds for $\Sp(2g,\Z)$ used in the proof
of Theorem~\ref{thm:torelli_distortion} are probably not sharp.  In
fact, Thurston has conjectured that $\SL(n,\Z)$ satisfies a quadratic
isoperimetric inequality for $n \geq 4$, and one would expect the same to
hold for $\Sp(2g,\Z)$ for $g \geq 3$.  If $\Sp(2g,\Z)$ satisfied a
quadratic isoperimetric inequality, then a theorem of
Papasoglu \cite{Papasoglu} (see \cite[Theorem~2]{GR} for a generalization
and alternate proof) would imply that it has a (quadratic,linear)
(isoperimetric,isodiametric) pair.  This would imply that our exponential lower bound on the distortion 
of $\T_g$ in $\Mod_g$ (and of $\T_{g,1}$ in $\Mod_{g,1}$) is sharp.  

In fact one could get by with much less than Thurston's
conjecture. Sharpness of the exponential lower bound on the distortion
of the Torelli group would follow if $\Sp(2g,\Z)$ satisfied an (exponential,linear) (isoperimetric,isodiametric) pair.
\end{remark}


\section{Distortion of surface braid groups}
\label{section:pointpushing_distortion}

\begin{proof}[Proof of Theorem~\ref{thm:surface_braids_distortion}]
Consider the exact sequence
\begin{equation}
\label{braidseq}
1 \longrightarrow K \longrightarrow \Mod_{g,b+n}^{p+m} \longrightarrow \Mod_{g,b}^p \longrightarrow 1.
\end{equation}

We first prove that $K$ is at least exponentially
distorted in $\Mod_{g,b+n}^{p+m}$.  Setting $H = \HH_1(\Sigma_g;\Z)$, we
will construct a $\Mod_{g,b+n}^{p+m}$-equivariant surjection $\psi : K \to H^{n+m}$.  Here
the $\Mod_{g,b+n}^{p+m}$-action on $H^{n+m}$ factors through the natural projection
$$\Mod_{g,b+n}^{p+m} \longrightarrow \Mod_g \longrightarrow \Sp(2g,\Z).$$
Since $H^{n+m}$ contains the nontrivial irreducible (by Theorem~\ref{thm:irreducibles_agree}) 
$\Sp(2g,\Z)$-module $H$, we can apply Propositions~\ref{proposition:rep_criterion} and \ref{proposition:lower_bound_distortion} to $K \normal \Mod_{g,b+n}^{p+m}$ 
with the homomorphism $\psi$ to conclude that $K$ is at least exponentially distorted in $\Mod_{g,b+n}^{p+m}$, as desired.

The construction of $\psi$ goes as follows.  Let $K'$ be the kernel of the surjection $\Mod_{g,b}^{p+m+n} \to \Mod_{g,b}^p$.
The map $\Mod_{g,b+n}^{p+m} \to \Mod_{g,b}^{p+m+n}$ induces a surjection $K \to K'$.  A basic result of
Birman \cite{Birman} shows that $K'$ is the fundamental group of the {\em configuration space of $n+m$ points
on $\Sigma_{g+b}^p$}, that is, that $K' \cong \pi_1((\Sigma_{g,b}^p)^{n+m} \setminus \Delta)$ where
$$\Delta = \{\text{$(x_1,\ldots,x_{n+m}) \in (\Sigma_{g,b}^p)^{n+m}$ $|$ $x_i = x_j$ for some $i \neq j$}\}.$$
The map $\psi$ is then the composition
$$K \longrightarrow K' = \pi_1 \big((\Sigma_{g,b}^p)^{n+m} \setminus \Delta \big)  \longrightarrow \pi_1 \big ((\Sigma_{g,b}^p)^{n+m} \big) \longrightarrow \pi_1 \big((\Sigma_g)^{n+m} \big) \longrightarrow H^{n+m}.$$

We now prove that $K$ is at most exponentially distorted in $\Mod_{g,b+n}^{p+m}$.
In \cite{Mosher}, Mosher proved that $\Mod_{g,b}^p$ is
automatic.  This implies \cite{ECHLPT} that $\Mod_{g,b}^p$ satisfies a simultaneous (quadratic,linear)
(isoperimetric,isodiametric) pair.  We can thus deduce the desired upper bound by applying
Proposition~\ref{proposition:distortion_upper_bound} to exact sequence (\ref{braidseq}).
\end{proof}


\section{Relative Johnson homomorphisms and the distortion of
homologically specified subgroups}
\label{section:other_subgroups}

In this section, we generalize the Johnson homomorphism to other
subgroups of $\Mod_{g,b}$ defined by a variety of homological
conditions.  We then apply these homomorphisms to give lower bounds on
distortion.


\subsection{Relative Johnson homomorphisms}
\label{section:generalized_johnson}

Fix $g \geq 3$ and $b \in \{0,1\}$, and set $H = \HH_1(\Sigma_{g,b};\Z)$.  We will consider
the following subgroups of the mapping class group $\Mod_{g,b}$.

\begin{definition}
Let $W$ be a subgroup of $H$.  Define
$$\Mod^W_{g,b} := \{\text{$f \in \Mod_{g,b}$ $|$ $f_{\ast}(W) \subset W$}\}.$$
Observe that $\Mod^W_{g,b}$ acts on $H/W$.  The kernel of this action will be
denoted by $\T^W_{g,b}$.
\end{definition}

\begin{example}
The classical Torelli group $\T_{g,b}$ corresponds to $\T^W_{g,b}$ with $W=0$.
\end{example}

\begin{example}
Fix $m \in \N$, and let $W$ be the kernel of the map $H \to \HH_1(\Sigma_{g,b};\Z / m \Z)$.  
Then $\Mod^W_{g,b} = \Mod_{g,b}$ and $\T^W_{g,b}$ is the {\em level $m$ subgroup of $\Mod_{g,b}$}, that is, the kernel
of natural map $\Mod_{g,b} \to \Sp(2g,\Z / m \Z)$.
\end{example}

\begin{example}
Let $\Sigma' \subset \Sigma_{g,b}$ be a subsurface, and let $W$ be the image
of $\HH_1(\Sigma';\Z)$ in $H$.  Then $\T^W_{g,b}$ is generated by $\T_{g,b}$
together with the set mapping classes supported on $\Sigma'$.  One may think
of this as the subgroup of mapping classes which are ``homologically
supported'' on $\Sigma'$.
\end{example}

\begin{example}
Let $W \subset \HH_1(\Sigma_{g,b};\Z)$ be a Lagrangian, that is, a maximal isotropic subgroup.  Then
$\T^W_{g,b}$ is the subgroup of mapping classes which preserve $W$ and act trivially on $H/W$.  It is easy
to see that $\T^W_{g,b}$ must also act trivially on $W$.  In other words, these are the Lagrangian subgroups
of Theorem \ref{thm:lagrangian_distortion}.
\end{example}

\noindent
We now wish to generalize the Johnson homomorphisms to the groups $\T^W_{g,b}$.  We begin by
discussing the appropriate target for these Johnson homomorphisms.  We will need the following well-known
lemma.

\begin{lemma}[{\cite[Theorem V.6.4]{BrownCohomology}}]
\label{lemma:johnsonimage}
If $A$ is an abelian group, then there is an injection $i:\wedge^3 A \to \HH_3(A;\Z)$.  If
$A$ is torsion-free, then $i$ is an isomorphism.
\end{lemma}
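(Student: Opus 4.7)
The plan is to construct $i$ from the Pontryagin product and verify its properties by reduction to small cases. Because $A$ is abelian, the multiplication $A \times A \to A$ is a group homomorphism and therefore induces a product on $\HH_*(A;\Z)$ via the K\"unneth cross product. This Pontryagin product endows $\HH_*(A;\Z)$ with the structure of a graded-commutative ring satisfying $\HH_0 \cong \Z$ and $\HH_1 \cong A$; graded-commutativity forces elements of $\HH_1$ to anti-commute, so the iterated triple product factors through a natural homomorphism $i : \wedge^3 A \to \HH_3(A;\Z)$.

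For the torsion-free isomorphism, first observe that when $A = \Z^n$ the classifying space $K(A,1)$ is the $n$-torus, whose integral homology ring is canonically $\wedge^* \Z^n$, so $i$ is an isomorphism. For an arbitrary torsion-free $A$, write $A$ as the filtered colimit of its finitely generated subgroups; each such subgroup is free abelian of finite rank since $A$ is torsion-free. Because both $\HH_3(-;\Z)$ and $\wedge^3(-)$ commute with filtered colimits, $i$ is an isomorphism for $A$.

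For injectivity in the general case, the same colimit trick reduces to the case of finitely generated $A$, which decomposes as $A = F \oplus T$ with $F$ free and $T$ finite. Flatness of $F$ kills the Tor contributions in the K\"unneth formula and yields $\HH_3(A;\Z) \cong \bigoplus_{i+j=3} \wedge^i F \otimes \HH_j(T;\Z)$, compatibly with the binomial decomposition $\wedge^3 A \cong \bigoplus_{i+j=3} \wedge^i F \otimes \wedge^j T$. The problem therefore reduces to producing injections $\wedge^j T \hookrightarrow \HH_j(T;\Z)$ for $j \leq 3$ and $T$ finite abelian. The main technical obstacle is exactly this last step: since tensoring with a torsion group is not left exact, one cannot simply bootstrap by induction on the number of cyclic summands of $T$. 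Instead, I would use the explicit Pontryagin ring structure on $\HH_*(\Z/n;\Z)$, together with K\"unneth applied successively to the cyclic factors of $T$, to identify the image of $i$ on each summand and verify injectivity directly. (Alternatively, one may simply appeal to Theorem V.6.4 of \cite{BrownCohomology}, which carries out this bookkeeping.)
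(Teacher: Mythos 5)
The paper does not prove this lemma at all: it is a black-box citation of Theorem V.6.4 in Brown's \emph{Cohomology of Groups}, so there is no in-paper argument to compare against. Your proposal is a reasonable sketch of how Brown's proof goes, but it has one genuine gap and one acknowledged punt.

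The gap is in the very first step, the construction of $i$. You write that graded-commutativity of the Pontryagin ring ``forces elements of $\HH_1$ to anti-commute, so the iterated triple product factors through $\wedge^3 A$.'' Over $\Z$ this does not follow. Graded-commutativity gives $xy = -yx$ for $x,y \in \HH_1$, hence only $2x^2 = 0$; it does not give $x^2 = 0$. Since the exterior algebra over $\Z$ is the quotient by the ideal generated by squares $a \otimes a$ (a strictly larger ideal than the one generated by $a\otimes b + b \otimes a$ when $2$-torsion is present), the triple product map $A^{\otimes 3} \to \HH_3(A;\Z)$ need not, on the basis of anticommutativity alone, kill the kernel of $A^{\otimes 3} \twoheadrightarrow \wedge^3 A$. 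You must separately prove the strict relation $x^2 = 0$ for $x \in \HH_1(A;\Z)$. One quick fix at the level you need: a class $x \in \HH_1(A;\Z) \cong A$ is $f_*(\gamma)$ for the homomorphism $f : \Z \to A$ sending $1 \mapsto x$ and $\gamma$ the generator of $\HH_1(\Z;\Z)$; naturality of the Pontryagin product then gives $x \cdot x = f_*(\gamma \cdot \gamma)$, which vanishes because $\HH_2(\Z;\Z) = 0$. (Brown proves strict anticommutativity in all odd degrees via the shuffle product on the bar resolution, which is what the cited theorem actually uses.)

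The remaining reductions --- filtered colimits to reduce to finitely generated $A$, the $n$-torus computation for $A = \Z^n$, and the K\"unneth splitting $\HH_3(F \oplus T) \cong \bigoplus_{i+j=3}\wedge^i F \otimes \HH_j(T)$ using freeness of $\HH_*(F;\Z)$ --- are all correct. But the final step, injectivity of $\wedge^j T \to \HH_j(T;\Z)$ for $T$ finite abelian and $j \le 3$, you explicitly do not carry out, deferring to Brown. That is the combinatorial heart of the theorem (one inducts on cyclic summands, tracking the Tor terms in K\"unneth and the Pontryagin ring of $\Z/n$), so as written the proposal does not constitute an independent proof; it is an accurate roadmap with one incorrect step and one step outsourced to the same reference the paper cites.
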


Now, using Lemma~\ref{lemma:johnsonimage}, the classical Johnson homomorphism on a
surface with boundary is of the form
$$\tau : \T_{g,1} \longrightarrow \wedge^3 H \cong \HH_3(H;\Z).$$
The relative Johnson homomorphism on a surface with boundary will be of the form
$$\tau^W : \T^W_{g,1} \longrightarrow \HH_3(H/W;\Z).$$
On a closed surface, the classical Johnson homomorphism is of the form
$$\tau : \T_g \longrightarrow (\wedge^3 H)/H \cong \HH_3(H;\Z) / H.$$
Here $H$ is embedded in $\wedge^3 H$ as
$H \wedge \omega$, where $\omega = a_1 \wedge b_1 + \cdots + a_g \wedge
b_g$ for any symplectic basis $\{a_1,b_1,\ldots,a_g,b_g\}$ of $H$.  By
Lemma~\ref{lemma:johnsonimage}, there is a copy of $\wedge^3 (H/W)$ in
$\HH_3(H/W;\Z)$.  For any subgroup $W$ of $H$, the aforementioned embedding of $H$ into $\wedge^3 H$
therefore projects to a homomorphism
$$H \hookrightarrow \wedge^3 H \to \wedge^3 (H/W) \hookrightarrow \HH_3(H/W;\Z)$$
whose image we will denote by $H^W$.  The relative Johnson homomorphism on
a closed surface will be of the form
$$\tau^W : \T^W_{g} \longrightarrow \HH_3(H/W;\Z) / H^W.$$

\begin{remark}
\label{remark:h_image}
The map $H \to H^W$ need not be injective.  Indeed, if $W$ is a maximal isotropic subspace of $H$, then
the image of $\omega$ in $\wedge^2 (H/W)$ is $0$.  Hence in this case $H^W=0$.
\end{remark}

\noindent
For simplicity, we will define $H^W = 0$ if $b = 1$.  We can now
state our theorem.
\begin{theorem}[Relative Johnson homomorphisms]
\label{theorem:relativejohnson}
For $g \geq 3$ and $b \in \{0,1\}$, there exist homomorphisms
$$\tau^W : \T^W_{g,b} \longrightarrow \HH_3(H/W;\Z) / H^W,$$
which we will call the {\em relative Johnson homomorphisms}, satisfying the following properties:
\begin{enumerate}
\item $\tau^0$ is the classical Johnson homomorphism.
\item If $W_1 \subset W_2$, then $\tau^{W_2}|_{\T^{W_1}_{g,b}}$ equals $\tau^{W_1}$ followed by
the natural map
$$\HH_3(H/W_1;\Z) / H^{W_1} \to \HH_3(H/W_2;\Z) / H^{W_2}.$$
\item For $f \in \Mod^W_{g,b}$ and $h \in \T_{g,b}$, we have $\tau(f h f^{-1}) = f_{\ast} \circ \tau(h)$, where
$f_{\ast}$ is the induced map on $\HH_3(H/W;\Z) / H^W$.
\end{enumerate}
\end{theorem}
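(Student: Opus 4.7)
The plan is to construct $\tau^W$ via a mapping torus and pushforward of the fundamental class, generalizing one standard interpretation of the classical Johnson homomorphism. Write $\pi = \pi_1(\Sigma_{g,b})$, and for $f \in \T^W_{g,b}$ let $T_f = \Sigma_{g,b} \times [0,1]/((x,0) \sim (f(x),1))$ denote the mapping torus, an orientable 3-manifold with $\partial T_f \cong T^2$ when $b = 1$ and $\partial T_f = \emptyset$ when $b = 0$. Since $f$ acts trivially on $H/W$, the surjection $\pi \to H \to H/W$ together with a choice of lift $\Z \to \pi_1(T_f)$ of the projection $\pi_1(T_f) \to \Z$ extends to a homomorphism $\pi_1(T_f) \to H/W$, giving a continuous map $\Phi_f : T_f \to K(H/W, 1)$. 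I would then define $\tau^W(f)$ as the image of $(\Phi_f)_*[T_f, \partial T_f] \in \HH_3(H/W;\Z)$ in the quotient $\HH_3(H/W;\Z)/H^W$.

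In the bounded case, a basepoint on $\partial \Sigma_{g,1}$ is fixed by $f$ and supplies a canonical topological section of $T_f \to S^1$, hence a canonical lift. With this lift, $\Phi_f$ is canonical, and it is nullhomotopic on $\partial T_f \cong T^2$ since both the $\partial \Sigma_{g,1}$ direction and the monodromy direction map to $0 \in H/W$. Thus $(\Phi_f)_*[T_f, \partial T_f]$ is a well-defined class in $\HH_3(H/W;\Z)$; since $H^W = 0$ by convention in this case, $\tau^W(f)$ is well-defined. In the closed case no such canonical section exists, and different lifts yield maps $\Phi_f$ that send the monodromy generator of $\pi_1(T_f)$ to varying elements $v \in H/W$. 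A direct computation using the cup product on $H^*(K(H/W,1);\Z)$ shows that the resulting pushforwards of $[T_f]$ differ by $v \wedge \omega_{H/W}$, where $\omega_{H/W}$ is the image in $\wedge^2(H/W)$ of the symplectic form $\omega \in \wedge^2 H$ under the classifying map on the fiber. As $v$ varies over $H/W$, these differences sweep out exactly $H^W$, so $\tau^W(f) \in \HH_3(H/W;\Z)/H^W$ is well-defined.

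The homomorphism property $\tau^W(fg) = \tau^W(f) + \tau^W(g)$ follows from a pair-of-pants cobordism: if $P$ is a pair of pants, then $P \times \Sigma_{g,b}$ can be twisted along its three boundary circles so that the boundary components become $T_f$, $T_g$, and $T_{(fg)^{-1}}$, and the twisted cobordism supports a coherent map to $K(H/W,1)$, forcing the sum of the three pushed-forward fundamental classes to vanish. Property (1) holds because for $W = 0$ this mapping-torus construction is the standard modern description of the classical Johnson homomorphism (as in Heap's thesis), and in any case can be checked on bounding-pair generators. Property (2) is immediate from the naturality of $K(H/W,1)$ and pushforward under the quotient $H/W_1 \to H/W_2$. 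Property (3) follows because conjugation by $\phi \in \Mod^W_{g,b}$ corresponds to the self-diffeomorphism of $T_f$ that applies $\phi$ fiberwise, which induces $\phi_*$ on $H/W$ and hence on $\HH_3(H/W;\Z)/H^W$.

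The main obstacle is the careful cup-product computation in the closed case, pinning down the monodromy-section ambiguity as precisely $H^W$. A possibly cleaner alternative is to define $\tau^W$ on $\T^W_g$ by first choosing any lift to $\T^W_{g,1}$ through the Birman exact sequence and then reducing modulo $H^W$; the key verification then becomes showing that two lifts of the same class differ by a point-pushing homeomorphism, whose classical Johnson image is of the form $\gamma \wedge \omega \in \wedge^3 H$, which lands in $H^W$ under the natural map $\wedge^3 H \to \wedge^3(H/W) \hookrightarrow \HH_3(H/W;\Z)$.
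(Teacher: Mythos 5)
Your construction is essentially the same one the paper uses: both build the mapping torus, map it to $K(H/W,1)$ via the induced map on $\pi_1$, and push forward the fundamental class, and both treat the closed case by reducing to the bounded case. The only cosmetic difference is that the paper caps off $\partial\Sigma_{g,1}$ with a disc before forming the mapping torus (giving a closed 3-manifold rather than one with torus boundary), and for the closed case the paper takes exactly your ``possibly cleaner alternative'' --- passing through the Birman exact sequence $1 \to \pi_1(T^1\Sigma_g) \to \T^W_{g,1} \to \T^W_g \to 1$ and observing that $\tau^W$ sends the kernel into $H^W$ --- rather than attempting the direct cup-product computation you flag as the main obstacle.
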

\begin{proof}
We begin with the case $b = 1$ (where $H^W = 0$).  
As described for example in \cite{JohnsonSurvey}, the classical Johnson 
homomorphism $\tau$ can be defined in various ways.  
We imitate the construction
based on mapping tori.  For a detailed discussion of this definition of
the classical Johnson homomorphism and a proof of its equivalence to the
more standard definition, see \S 3-4 of \cite{HainTorelli}.  For $h
\in \T^W_{g,1}$, choose a homeomorphism $h'$ of $\Sigma_{g,1}$ representing $h$ 
and construct a homeomorphism $\overline{h}$ of $\Sigma_g$ by gluing a
disc $D$ to the boundary component of $\Sigma_{g,1}$ to get a copy of $\Sigma_g$ 
and defining $\overline{h}$ to equal $1$ on $D$ and $h'$ on $\Sigma_{g,1} = \Sigma_g \setminus D$.
Next, let $M_{\overline{h}}$ be the {\em mapping torus} of $\overline{h}$, that is,
the quotient of $\Sigma_{g} \times [0,1]$ by the equivalence relation
$(x,1) \sim (\overline{h}(x),0)$.  Fixing a basepoint $\upsilon$ on $D \times 0 \subset \Sigma_g \times 0$,
we get a canonical loop $\ell \in \pi_1(M_{\overline{h}},\upsilon)$, via $\ell(t) = \upsilon \times t$.  Observe that
$$\HH_1(M_{\overline{h}};\Z) / \langle \ell \rangle \cong H / \langle \{\text{$\alpha -
h_\ast \alpha$ $|$ $\alpha \in H$}\} \rangle.$$ 
Since $h \in \T^W_{g,1}$, this
has a natural projection to $H/W$.  We conclude that there is a natural
map 
$$\pi_1(M_{\overline{h}},\upsilon) \longrightarrow H/W,$$ 
and since $M_{\overline{h}}$ is an
Eilenberg-MacLane space, we have an induced map
$$\phi : M_{\overline{h}} \longrightarrow K(H/W,1).$$ 
We define 
$$\tau^W(h) = \phi_{\ast} ([M_{\overline{h}}]) \in \HH_3(H/W;\Z).$$
It is easy to see that this definition is independent of the choices involved
in its construction, and $\tau^W$ clearly satisfies Condition 2 of
the theorem.  Moreover, if $W = 0$,
then $\tau^W$ reduces to Johnson's definition of the classical Johnson
homomorphism via mapping tori, so Condition 1 follows.  The proof (see
\cite{JohnsonSurvey}) that
the classical Johnson homomorphism is a homomorphism which satisfies
Condition 3 generalizes verbatim to our situation, so we are done.

We now deal with the case $b = 0$.  We have an exact sequence
$$1 \longrightarrow \pi_1(T^1\Sigma_g) \longrightarrow \T^W_{g,1} \longrightarrow \T^W_g \longrightarrow 1,$$
where $T^1 \Sigma_g$ is the unit tangent bundle of $\Sigma_g$; see \cite{Johnson1983}.
The classical Johnson homomorphism restricted to $\pi_1(T^1\Sigma_g)$ lands in $H \subset \wedge^3 H$.  It follows
that $\tau^W$ restricted to $\pi_1(T^1\Sigma_g)$ lands in $H^W \subset \wedge^3 H/W$, and thus we have
an induced map
$$\T^W_g \longrightarrow (\HH_3(H/W;\Z))/ H^W.$$
These maps clearly satisfy the conditions of the theorem.
\end{proof}

\begin{remark}
In the special case of the Lagrangian subgroup, Levine
\cite{LevineFiniteType} has given a different construction of $\tau^W$.
His construction imitates Johnson's original definition of $\tau$
\cite[First Definition]{JohnsonSurvey}, while our construction is
inspired by Johnson's definition in terms of mapping tori \cite[Second
Definition]{JohnsonSurvey}.
\end{remark}

\begin{remark}
Another approach sufficient for the proofs of Theorems~\ref{thm:lagrangian_distortion} and \ref{thm:pullback_distortion} would be to use Morita's \cite{MoritaExtension} 
extension of the classical Johnson homomorphism to a crossed
homomorphism $\Mod_{g,1} \to \frac{1}{2} \wedge^3 H$.
\end{remark}


\subsection{Applications to subgroup distortion}
\label{section:lagrangian_distortion}

We now apply the relative Johnson homomorphisms $\tau^W$ together with 
Propositions~\ref{proposition:rep_criterion} and 
\ref{proposition:lower_bound_distortion} to give lower bounds on the 
distortions of some of the groups $\T_{g,b}^W$ inside $\Mod_{g,b}$.  
For these distortions to make sense, however, 
we must first prove the following.

\begin{proposition}
\label{proposition:relativetorellifinite}
For $g \geq 3$ and $b \in \{ 0,1 \}$, let $W$ be any subgroup 
of $\HH_1(\Sigma_{g,b};\Z)$.  Then
the groups $\T^W_{g,b}$ and $\Mod^W_{g,b}$ are finitely generated.
\end{proposition}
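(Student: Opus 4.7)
The plan is to use the symplectic representation $\pi : \Mod_{g,b} \twoheadrightarrow \Sp(2g,\Z)$ to reduce the problem to showing that certain subgroups of $\Sp(2g,\Z)$ are finitely generated. Since $\pi$ is surjective with finitely generated kernel $\T_{g,b}$ (by Johnson's theorem, for $g \geq 3$), the images $\pi(\Mod^W_{g,b})$ and $\pi(\T^W_{g,b})$ fit into short exact sequences
$$1 \longrightarrow \T_{g,b} \longrightarrow \Mod^W_{g,b} \longrightarrow \Sp(2g,\Z)_W \longrightarrow 1$$
and
$$1 \longrightarrow \T_{g,b} \longrightarrow \T^W_{g,b} \longrightarrow K_W \longrightarrow 1,$$
where $\Sp(2g,\Z)_W := \{A \in \Sp(2g,\Z) : A(W) = W\}$ and $K_W$ is the kernel of the natural action of $\Sp(2g,\Z)_W$ on $H/W$. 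Because an extension of finitely generated groups by finitely generated groups is again finitely generated, it suffices to show that $\Sp(2g,\Z)_W$ and $K_W$ are both finitely generated.

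To handle an arbitrary (possibly non-saturated) subgroup $W$, I would first replace $W$ by its saturation $W' \subseteq H$, that is, the preimage of the torsion subgroup of $H/W$. Any automorphism preserving $W$ must preserve its saturation, so $\Sp(2g,\Z)_W \subseteq \Sp(2g,\Z)_{W'}$; conversely, the kernel of the natural action of $\Sp(2g,\Z)_{W'}$ on the finite group $W'/W$ lies in $\Sp(2g,\Z)_W$. Hence $\Sp(2g,\Z)_W$ has finite index in $\Sp(2g,\Z)_{W'}$, reducing us to the case of saturated $W$.

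The main step is the observation that, for saturated $W$, both $\Sp(2g,\Z)_W$ and $K_W$ are realized as the integer points of $\Q$-algebraic subgroups of $\Sp(2g,\Q)$, cut out by linear conditions on the matrix entries: ``$A$ preserves $W \otimes \Q$'' for $\Sp(2g,\Z)_W$, and additionally ``$A - I$ maps $H$ into $W$'' for $K_W$. The theorem of Borel and Harish-Chandra then implies that each such arithmetic subgroup is finitely generated, which completes the proof.

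The main technical hurdle is confirming that Borel--Harish-Chandra applies (in particular, that one gets finite generation for arithmetic subgroups of possibly non-reductive $\Q$-groups, as in the Lagrangian or parabolic case). An elementary alternative would be to fix a basis of $H$ adapted to a normal form for $W$ under the $\Sp$-action (classified by the rank of $W$ and the rank of the restriction of the symplectic form to it), describe $\Sp(2g,\Z)_W$ explicitly as a semidirect product of an integer unipotent radical with a Levi factor built from $\SL(n,\Z)$ and $\Sp(2m,\Z)$ pieces, and invoke the classical finite generation of these blocks directly.
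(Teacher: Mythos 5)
Your approach is essentially the same as the paper's: reduce to finite generation of the image $B$ in $\Sp(2g,\Z)$ using Johnson's theorem that $\T_{g,b}$ is finitely generated, then observe that $B$ is arithmetic (the integer points, up to commensurability, of the $\Q$-algebraic subgroup of $\Sp(2g)$ cut out by preserving $W\otimes\Q$ and, in the $\T^W$ case, acting trivially on $H/W$), and invoke Borel--Harish-Chandra (equivalently, Morris, Thm.\ 5.57). You are somewhat more careful than the paper about non-saturated $W$, which is a reasonable thing to address; just note that your phrasing ``the kernel of the natural action of $\Sp(2g,\Z)_{W'}$ on $W'/W$'' is not quite right, since $\Sp(2g,\Z)_{W'}$ need not preserve $W$ and so has no well-defined action on $W'/W$ --- but the finite-index conclusion still holds, e.g.\ because $\Sp(2g,\Z)_{W'}$ permutes the finitely many subgroups of $W'$ of index $[W':W]$, so the stabilizer $\Sp(2g,\Z)_W$ has finite index.
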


\begin{proof}
Let $\Gamma$ be either $\T^W_{g,b}$ or $\Mod^W_{g,n}$.  We have an exact
sequence $$1 \longrightarrow \T_{g,b} \longrightarrow \Gamma
\longrightarrow B \longrightarrow 1,$$ where $B$ is the image of $\Gamma
< \Mod_{g,b}$ in $\Sp(2g,\Z)$.  As has already been mentioned, Johnson
\cite{Johnson1983} showed that $\T_{g,b}$ is finitely generated.  It is
enough, therefore, to show that $B$ is finitely generated.  To see this,
first note that $B$ is the set of $\Z$-points of the $\Q$-algebraic
group consisting of matrices in $\SL(2g,\Q)$ which preserve the
(integral) subgroup $W < \Q^{2g}$ and the symplectic form and (for $\Gamma =
\T^W_{g,b}$) which act trivially on $H/W$.  Hence $B$ 
is arithmetic, and as such it is finitely generated (see,
e.g. \cite{Mo}, Theorem 5.57).
\end{proof}

\noindent
We now prove Theorems~\ref{thm:lagrangian_distortion} and \ref{thm:pullback_distortion}
from the introduction.

\begin{proof}[Proof of Theorem~\ref{thm:lagrangian_distortion}]
Let $H = \HH_1(\Sigma_{g,b};\Z)$ with symplectic basis
$\{a_1,\ldots,a_g,b_1, \ldots, b_g\}$ and let $W = \langle a_1, \ldots
a_g \rangle$ be the standard Lagrangian. Then the Lagrangian subgroup
$L$ is $\T^W_{g,b}$.  By Lemma~\ref{lemma:stupid_distortion_lemma}
and Proposition~\ref{proposition:relativetorellifinite}, to prove that
$\T^W_{g,b}$ is at least exponentially distorted in $\Mod_{g,b}$, it is
enough to prove that $\T^W_{g,b}$ is at least exponentially distorted in
$\Mod^W_{g,b}$.  The proof is based on the exact sequence
\begin{equation}
\label{lagrangianseq}
1 \longrightarrow \T^W_{g,b} \longrightarrow \Mod^W_{g,b} \longrightarrow A \longrightarrow 1,
\end{equation}
where $A < \Sp(2g,\Z)$ consists of symplectic matrices with a $g \times
g$ block of zeros in the lower left-hand corner.  Now, by
Remark~\ref{remark:h_image}, we have $H^W = 0$.  By Lemma~\ref{lemma:johnsonimage}, the relative Johnson homomorphism $\tau^W$ given by
Theorem~\ref{theorem:relativejohnson} is of the form $$\tau^W :
\T^W_{g,b} \longrightarrow \HH_3(H/W;\Z) \cong \wedge^3(H/W) \cong
\wedge^3 \langle b_1,\ldots,b_g \rangle.$$ Moreover, it is easy to see
that $A$ acts on $\HH_3(H/W;\Z)$ via the surjection $A \to \SL(g,\Z)$
given by projection to the lower right hand $g \times g$ block.  
For $g \geq 4$, Theorem~\ref{thm:irreducibles_agree} says that
the $A$-module $\wedge^3 \Z^g$ is nontrivial and
irreducible, so we can apply Propositions~\ref{proposition:rep_criterion} and
\ref{proposition:lower_bound_distortion} to $\T^W_{g,b} \normal
\Mod^W_{g,b}$, setting $\psi=\tau^W$, 
to conclude that $\T^W_{g,b}$ is at least exponentially
distorted in $\Mod^W_{g,b}$, as desired.
\end{proof}

\begin{proof}[Proof of Theorem~\ref{thm:pullback_distortion}]
First note that $K = \T^{W}_{g}$, where $W = \HH_1(\Sigma_{h,1};\Z)$.  
Again, by Lemma~\ref{lemma:stupid_distortion_lemma} and Proposition~\ref{proposition:relativetorellifinite}, to prove
that $\T^W_{g}$ is at least exponentially distorted in $\Mod_{g}$, it is enough to prove that $\T^W_{g}$ is at
least exponentially distorted in $\Mod^W_{g}$.  To prove this we
consider the exact sequence
\begin{equation}
\label{pullbackseq}
1 \longrightarrow \T^W_{g} \longrightarrow \Mod^W_{g} \longrightarrow A \longrightarrow 1,
\end{equation}
where $A < \Sp(2g,\Z)$ is isomorphic to $\Sp(2(g-h),\Z)$.  Now, 
by Lemma~\ref{lemma:johnsonimage}, the homomorphism $\tau^W$ 
given by Theorem~\ref{theorem:relativejohnson} is of the form
$$\tau^W : \T^W_{g} \longrightarrow \HH_3(H/W;\Z) / H^W \cong (\wedge^3 \HH_1(\Sigma_{g-h,1};\Z)) / \HH_1(\Sigma_{g-h,1};\Z).$$
Since $g-h \geq 2$, Theorem~\ref{thm:irreducibles_agree} says that the $\Sp(2(g-h),\Z)$-module
$$(\wedge^3 \HH_1(\Sigma_{g-h,1};\Z)) / \HH_1(\Sigma_{g-h,1},\Z)$$
is nontrivial and irreducible.  Applying Propositions~\ref{proposition:rep_criterion} and \ref{proposition:lower_bound_distortion}
to $\T^W_{g} \normal \Mod^W_{g}$ with $\psi=\tau^W$ then gives
that $\T^W_{g}$ is at least exponentially
distorted in $\Mod^W_{g}$, as desired.
\end{proof}

\noindent
The following theorem gives lower bounds for the distortion of
$\T^W_{g,b}$ in $\Mod_{g,b}$ for arbitrary $W$.  As the proof is 
similar to the proofs of
Theorems~\ref{thm:lagrangian_distortion} and \ref{thm:pullback_distortion}, we omit it.

\begin{theorem}
\label{thm:gentorellidistorted}
Let $g \geq 3$, $b \in \{0,1\}$, and $W \subset H$.  Assume that some element of $\Mod^W_{g,b}$ acts partially hyperbolically on
$\HH_3(H/W;\Z) / H^W$.
Then $\T^W_{g,b}$
is at least exponentially distorted in $\Mod_{g,b}$.
\end{theorem}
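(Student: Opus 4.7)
The plan is to follow the template of the proofs of Theorems~\ref{thm:lagrangian_distortion} and \ref{thm:pullback_distortion}, streamlined by the fact that partial hyperbolicity is now given by hypothesis rather than extracted via Proposition~\ref{proposition:rep_criterion}.

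First, by Lemma~\ref{lemma:stupid_distortion_lemma} combined with Proposition~\ref{proposition:relativetorellifinite}, I would reduce to proving that $\T^W_{g,b}$ is at least exponentially distorted in $\Mod^W_{g,b}$. The relevant exact sequence is
$$1 \longrightarrow \T^W_{g,b} \longrightarrow \Mod^W_{g,b} \longrightarrow A \longrightarrow 1,$$
where $A$ is the image of $\Mod^W_{g,b}$ in $\Sp(2g,\Z)$.  Theorem~\ref{theorem:relativejohnson} provides the relative Johnson homomorphism
$$\tau^W : \T^W_{g,b} \longrightarrow \HH_3(H/W;\Z)/H^W,$$
which is $\Mod^W_{g,b}$-equivariant by Condition 3 of that theorem.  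Since $H/W$ is a finitely generated abelian group, the target is itself finitely generated, so I would let $V$ be the image of $\tau^W$ modulo its torsion subgroup and let $\psi$ denote $\tau^W$ followed by the quotient map; then $\psi$ is a surjective, equivariant homomorphism onto the free abelian group $V$, exactly as required by Proposition~\ref{proposition:lower_bound_distortion}.

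The hypothesis supplies an element $f \in \Mod^W_{g,b}$ acting partially hyperbolically on $\HH_3(H/W;\Z)/H^W$.  Since $f$ preserves both the image of $\tau^W$ (by equivariance) and the torsion subgroup, its action descends to an action on $V$.  The main point I expect to need to verify is that this descended action still has an eigenvalue of modulus greater than $1$, which amounts to showing that $V$ is rationally full in $\HH_3(H/W;\Z)/H^W$.  For this I would invoke Condition 2 of Theorem~\ref{theorem:relativejohnson}: the restriction $\tau^W|_{\T_{g,b}}$ equals the classical Johnson homomorphism followed by the natural map $\HH_3(H;\Z)/H \to \HH_3(H/W;\Z)/H^W$.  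The classical Johnson homomorphism surjects onto $\wedge^3 H/H$, which in turn maps onto $\wedge^3(H/W)/H^W$ since $H \to H/W$ is surjective and $\wedge^3$ is right exact.  Because $\HH_3(H/W;\Z) \otimes \Q = \wedge^3(H/W) \otimes \Q$ (torsion in $H_3$ of a finitely generated abelian group vanishes after tensoring with $\Q$, by K\"{u}nneth), the image of $\tau^W|_{\T_{g,b}}$ has full rank in $\HH_3(H/W;\Z)/H^W$.  Consequently $V \otimes \C = (\HH_3(H/W;\Z)/H^W) \otimes \C$ as $\Mod^W_{g,b}$-modules, the partially hyperbolic action of $f$ transfers to $V$, and Proposition~\ref{proposition:lower_bound_distortion} then yields the desired exponential lower bound on the distortion of $\T^W_{g,b}$ in $\Mod^W_{g,b}$.
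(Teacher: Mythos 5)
Your proof is correct and follows exactly the template that the paper indicates (it omits the proof of this theorem, stating only that it is similar to the proofs of Theorems~\ref{thm:lagrangian_distortion} and~\ref{thm:pullback_distortion}): reduce to $\Mod^W_{g,b}$ via Lemma~\ref{lemma:stupid_distortion_lemma} and Proposition~\ref{proposition:relativetorellifinite}, then feed the relative Johnson homomorphism into Proposition~\ref{proposition:lower_bound_distortion}. You also correctly identify and resolve the one genuine wrinkle that arises for arbitrary $W$ but not for the Lagrangian or subsurface cases, namely that $H/W$ may have torsion so $\HH_3(H/W;\Z)/H^W$ need not be free abelian and $\tau^W$ need not be onto; passing to the image of $\tau^W$ modulo torsion and using Condition~2 of Theorem~\ref{theorem:relativejohnson} together with surjectivity of the classical Johnson homomorphism and right-exactness of $\wedge^3$ to see this is rationally full is precisely the right fix, and it preserves partial hyperbolicity since the complexifications coincide.
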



\noindent
Dept. of Mathematics, University of Chicago\\
5734 S. University Ave.\\
Chicago, IL 60637\\

\noindent
E-mails: {\tt broaddus@math.uchicago.edu} (Nathan Broaddus),
{\tt farb@math.uchicago.edu} (Benson Farb), {\tt andyp@math.uchicago.edu} (Andrew Putman)

\end{document}